\DeclarePairedDelimiter\ceil{\lceil}{\rceil}
\DeclarePairedDelimiter\floor{\lfloor}{\rfloor}
\newcommand{\Mod}[1]{\ (\mathrm{mod}\ #1)}
\begin{document}

\theoremstyle{plain}

\newtheorem{thm}{Theorem}[section]
\newtheorem{prop}[thm]{Proposition}
\newtheorem{lem}[thm]{Lemma}

\theoremstyle{definition}

\newtheorem{cor}[thm]{Corollary}

\newtheorem{term}[thm]{Terminology}
\newtheorem{defn}[thm]{Definition}
\newtheorem{defns}[thm]{Definitions}
\newtheorem{con}[thm]{Construction}
\newtheorem{exmp}[thm]{Example}
\newtheorem{nexmp}[thm]{Non-Example}
\newtheorem{exmps}[thm]{Examples}
\newtheorem{notn}[thm]{Notation}
\newtheorem{notns}[thm]{Notations}
\newtheorem{addm}[thm]{Addendum}
\newtheorem{exer}[thm]{Exercise}
\newtheorem{obs}[thm]{Observation}

\theoremstyle{remark}
\newtheorem{rem}[thm]{Remark}
\newtheorem{claim}[thm]{Claim}

\newtheorem{rems}[thm]{Remarks}
\newtheorem{warn}[thm]{Warning}
\newtheorem{sch}[thm]{Scholium}

\theoremstyle{plain} 
\newcommand{\thistheoremname}{}
\newtheorem{genericthm}[thm]{\thistheoremname}
\newenvironment{namedthm}[1]
  {\renewcommand{\thistheoremname}{#1}%
   \begin{genericthm}}
  {\end{genericthm}}

\begin{titlepage}
\title{Braid Groups on Triangulated Surfaces and Singular Homology}
\author{Karthik Yegnesh}
\end{titlepage}
\pagestyle{fancy} 
\fancyhf{} 
\fancyfoot[R]{\thepage}

\fancypagestyle{plain}{%
    \renewcommand{\headrulewidth}{0pt}%
    \fancyhf{}%
    \fancyfoot[R]{\thepage}%
}


\vspace{-20mm}

\maketitle
\begin{abstract}
Let $\Sigma_g$ denote the closed orientable surface of genus $g$ and fix an arbitrary simplicial triangulation of $\Sigma_g$. We construct and study a natural surjective group homomorphism from the surface braid group on $n$ strands on $\Sigma_g$ to the first singular homology group of $\Sigma_g$ with integral coefficients. In particular, we show that the kernel of this homomorphism is generated by canonical braids which arise from the triangulation of $\Sigma_g$. This provides a simple description of natural subgroups of surface braid groups which are closely tied to the homology groups of the surfaces $\Sigma_g$. 
\end{abstract}
\section{Introduction and Background}

The classical braid groups $B_n$ on the plane were introduced by Artin \cite{name}. Geometrically, elements of such braid groups appear as a collection of $n$ paths emanating from a set of $n$ distinct points on the plane which wind around each other and return to some permutation of the original set of points. Braid groups play an important role in various areas of mathematics, including the knot theory, representation theory, and the study of monodromy invariants in algebraic geometry. It is a well-known result of Artin \cite{name} that the group $B_n$ has the presentation
\begin{equation}
    B_n\simeq\langle\sigma_1, \sigma_2, \ldots, \sigma_{n-1}|\sigma_i\sigma_{i+1}\sigma_i=\sigma_{i+1}\sigma_i\sigma_{i+1}, \sigma_i\sigma_j=\sigma_j\sigma_i\rangle
\end{equation}
where $1\leq i\leq n-2$ in the first group of relations and $|i-j|\geq 2$ in the second group of relations. The generators $\sigma_i$ correspond to ``transposition" braids which swap adjacent points.\\
\indent Zariski \cite{zar} later provided a natural generalization of these notions by considering braid groups on more general surfaces. Let us restrict our attention to $\Sigma_g$, the closed orientable surface of genus $g$. Let $\Sigma_g^n$ denote the $n$-fold cartesian product of $\Sigma_g$ with itself and let $F_n(\Sigma_g)$ denote the \textit{$n^{th}$ ordered configuration space} of $\Sigma_g$, i.e the space 
$$
F_n(\Sigma_g)=\{(x_1, \ldots, x_n)\in\Sigma_g^n|x_i\neq x_j, \forall i\neq j\}.
$$
\indent Note that the symmetric group $S_n$ acts freely on $F_n(\Sigma_g)$ by permuting coordinates. We define the  \textit{$n^{th}$ configuration space} $C_n(\Sigma_g)$ of $\Sigma_g$ as the orbit space $C_n(\Sigma_g)=F_n(\Sigma_g)/S_n$. Note that $C_n(\Sigma_g)$ is a 2-manifold since $F_n(\Sigma_g)\subset\Sigma_g^n$ is an open subset and the permutation action of $S_n$ on $F_n(\Sigma_g)$ is free. We define the \textit{$n^{th}$ braid group} $B_n(\Sigma_g)$ of $\Sigma_g$ as the fundamental group$$B_n(\Sigma_g)=\pi_1 (C_n(\Sigma_g), [x_1, \ldots, x_n])$$

where $[x_1, \ldots, x_n]\in C_n(\Sigma_g)$ is a set of unordered points in $F_n(\Sigma_g)$. Since we are working with connected surfaces, we usually leave the basepoint implicit in our notation.
\begin{exmp}
The classical braid group $B_n$ is the fundamental group of the $n$-fold configuration space $C_n(\mathbb{R}^2)$ of the plane. In fact, $C_n(\mathbb{R}^2)$ is an Eilenberg-Maclane space $K(B_n,1)$.
\end{exmp}

\begin{rem}
If $n=1$, then $F_n(\Sigma_g)=\Sigma_g$, so $B_n(\Sigma_g)=\pi_1(\Sigma_g)$. Thus, braid groups on surfaces can be viewed as generalizations of their fundamental groups.
\end{rem}
Every element $[\sigma]\in B_n(\Sigma_g)$ induces a permutation of the elements of its basepoint. Thus, we obtain a surjective group homomorphism $f:B_n(\Sigma_g)\rightarrow S_n$ (which implies that $B_n(\Sigma_g)$ is nonabelian for $n\geq 3$). The subgroup $\mathrm{ker}(f)\subset B_n(\Sigma_g)$ is called the \textit{pure braid group} on $\Sigma_g$ and is denoted $P_n(\Sigma_g)$. By definition, $P_n(\Sigma_g)$ is a normal subgroup of index $n!$. Note also that $P_n(\Sigma_g)\simeq\pi_1 F_n(\Sigma_g)$. These groups fit into a canonical short exact sequence$$
1\longrightarrow P_n(\Sigma_g)\longrightarrow B_n(\Sigma_g)\longrightarrow S_n\longrightarrow 1.
$$

 Let $p^i:F_n(\Sigma_g)\rightarrow\Sigma_g$ denote the projection onto the $i^{th}$ coordinate for $1\leq i\leq n$. Then we have induced maps $$p^i_*:P_n(\Sigma_g,(x_1, \ldots, x_n))\longrightarrow\pi_1(\Sigma_g, x_i)
$$

for each $1\leq i\leq n$. The \textit{vertex loop} of $x_i$ induced by $[\sigma]\in P_n(\Sigma_g)$ is $p^i_*([\sigma])$. We denote this by $[\sigma]_{x_i}$.

We study interactions between surface braid groups $B_n(\Sigma_g)$ and the singular homology groups of $\Sigma_g$. In particular, we study a natural group homomorphism$$
\omega:B_n(\Sigma_g)\longrightarrow H_1(\Sigma_g; \mathbb{Z})
$$

which maps a braid on $\Sigma_g$ to the integral homology class of the formal sum of the individual paths (viewed as singular 1-simplices) it induces on each element of the basepoint. We show that $\mathrm{ker}(\omega)$ is generated by simple braids which arise from triangulations of $\Sigma_g$. Generally, $B_n(\Sigma_g)$ is a complicated object for arbitrary $g$, while the homology groups $H_1(\Sigma_g; \mathbb{Z})\simeq\mathbb{Z}^{2g}$ are well-understood. Thus, our results describe a useful and well-behaved relationship between braid groups on surfaces and homology groups. 
\section{Acknowledgments} 

I'd like to thank my mentor Gus Lonergan for his guidance in this project and Prof. Roman Bezrukavnikov for suggesting the project. I would also like to thank Dr. John Rickert and Dr. Tanya Khovanova for their advice on mathematical writing. Additionally, I thank Daniel Vitek for numerous helpful comments and revisions. Also, I thank the Research Science Institute, the Center for Excellence in Education, and the Massachusetts Institute for Technology for supporting this research.

\section{Preliminary Constructions}

\subsection{Construction of the Homomorphism $\omega$}\label{pre}
Let $B_n(\Sigma_g)$\footnote{$\omega$ can be constructed in the same way for braid groups on general topological spaces.} be based at $[x_1, \ldots x_n]\in C_n(\Sigma_g)$. We fix this basepoint throughout the paper. Let $[\phi]\in B_n(\Sigma_g)$ be a braid and let $\pi:F_n(\Sigma_g)\rightarrow C_n(\Sigma_g)$ denote the quotient map. Let $(x_1, \ldots, x_n)$ be an element in the fiber of $[x_1, \ldots, x_n]$ under $\pi$. The map $\pi$ is an $S_n$-cover, so we can lift $\phi$ to a unique path $\tilde{\phi}:[0,1]\rightarrow F_n(\Sigma_g)$ with $\tilde{\phi}(0)=(x_1, \ldots, x_n)$. 
\begin{prop}\label{zero}
Let $[\phi]\in B_n(\Sigma_g)$. Then $\sum_{i=1}^n(p_i\circ\tilde{\phi})\in\mathrm{ker}(\partial_1)$, where $\partial_1$ denotes the boundary operator.
\end{prop}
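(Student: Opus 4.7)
The plan is to unpack the definitions and observe that the sum collapses to zero for purely combinatorial reasons coming from the fact that $\pi$ is an $S_n$-cover. Recall that for a singular $1$-simplex $\gamma:[0,1]\to\Sigma_g$, the boundary operator acts by $\partial_1\gamma=\gamma(1)-\gamma(0)$ as an element of $C_0(\Sigma_g;\mathbb{Z})$. Extending linearly, showing $\sum_{i=1}^n(p_i\circ\tilde\phi)\in\ker(\partial_1)$ amounts to verifying the identity
\begin{equation*}
\sum_{i=1}^n\bigl[(p_i\circ\tilde\phi)(1)-(p_i\circ\tilde\phi)(0)\bigr]=0
\end{equation*}
in $C_0(\Sigma_g;\mathbb{Z})$.

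First, I would evaluate both endpoints explicitly. By the choice of lift, $\tilde\phi(0)=(x_1,\ldots,x_n)$, so $(p_i\circ\tilde\phi)(0)=x_i$. For the other endpoint, I would use the key property of the quotient map $\pi:F_n(\Sigma_g)\to C_n(\Sigma_g)$: since $\pi$ is the $S_n$-cover associated to permuting coordinates and $\phi$ is a loop at $[x_1,\ldots,x_n]$, the lift $\tilde\phi$ must terminate in the fiber $\pi^{-1}([x_1,\ldots,x_n])$. That fiber consists precisely of the $n!$ tuples obtained by permuting the coordinates of $(x_1,\ldots,x_n)$, so there exists some $\sigma\in S_n$ with $\tilde\phi(1)=(x_{\sigma(1)},\ldots,x_{\sigma(n)})$, giving $(p_i\circ\tilde\phi)(1)=x_{\sigma(i)}$.

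Substituting these values, the sum becomes $\sum_{i=1}^n(x_{\sigma(i)}-x_i)$. Since $\sigma$ is a bijection on $\{1,\ldots,n\}$, reindexing the first sum by $j=\sigma(i)$ yields $\sum_{j=1}^n x_j-\sum_{i=1}^n x_i=0$ in the free abelian group $C_0(\Sigma_g;\mathbb{Z})$, completing the verification.

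I do not anticipate any real obstacle here: the result is essentially a bookkeeping statement saying that the endpoints of $\tilde\phi$, as multisets in $\Sigma_g$, coincide. The only point requiring care is the invocation of the covering-space structure to justify that $\tilde\phi(1)$ must be a permutation of $(x_1,\ldots,x_n)$, which is precisely what makes $\pi$ an $S_n$-cover and what ensures $[\phi]$ defines an element of $\pi_1(C_n(\Sigma_g))$ in the first place.
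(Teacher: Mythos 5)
Your proof is correct and follows essentially the same route as the paper: both apply $\partial_1$ linearly, evaluate the two endpoint sums, and observe that $\tilde\phi(1)$ is a coordinate permutation of $\tilde\phi(0)$ because $\phi$ is a loop in the quotient $C_n(\Sigma_g)$, so the two sums of $0$-chains cancel. Your write-up just makes the covering-space justification and the reindexing by $\sigma$ slightly more explicit than the paper does.
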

\begin{proof} 
We have that 
\begin{equation}
\partial_1(\sum_{i=1}^n(p_i\circ\tilde{\phi}))=\sum_{i=1}^n\partial_1(p_i\circ\tilde{\phi})
\end{equation}
\begin{equation}
=\sum_{i=1}^n(p_i\circ\tilde{\phi})|_{\{1\}}-\sum_{i=1}^n(p_i\circ\tilde{\phi})|_{\{0\}}
\end{equation}
By definition of $C_n(\Sigma_g)$, the $n$-tuple $(p_1\circ\tilde{\phi}|_{\{1\}}, \ldots, p_n\circ\tilde{\phi}|_{\{1\}})$ is a permutation of $(p_1\circ\tilde{\phi}|_{\{0\}}, \ldots,p_n\circ\tilde{\phi}|_{\{0\}})$, so $\sum_{i=1}^n(p_i\circ\tilde{\phi})|_{\{1\}}=\sum_{i=1}^n(p_i\circ\tilde{\phi})|_{\{0\}}$ and thus $\partial_1(\sum_{i=1}^n(p_i\circ\tilde{\phi}))$ vanishes.
\end{proof}

Thus, $\sum_{i=1}^n(p_i\circ\tilde{\phi})$ is a singular 1-cycle and hence yields a homology class, so that we can define a function $\omega:B_n(\Sigma_g)
\rightarrow H_1(\Sigma_g; \mathbb{Z})$ sending $[\phi]\mapsto [\sum_{i=1}^n(p_i\circ\tilde{\phi})]$.

\begin{prop}
The function $\omega$ is a well-defined group homomorphism.
\end{prop}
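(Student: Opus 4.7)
The plan is to verify the two requirements separately. For well-definedness, I need to show that if $\phi\simeq\phi'$ are homotopic loops in $C_n(\Sigma_g)$ rel basepoint, then $\sum_i p_i\circ\tilde{\phi}$ and $\sum_i p_i\circ\tilde{\phi}'$ are homologous in $\Sigma_g$. I would invoke the homotopy lifting property of the covering $\pi\colon F_n(\Sigma_g)\to C_n(\Sigma_g)$ to lift the homotopy to one between $\tilde{\phi}$ and $\tilde{\phi}'$ rel endpoints in $F_n(\Sigma_g)$, then post-compose with each $p_i$ to produce a homotopy rel endpoints between $p_i\circ\tilde{\phi}$ and $p_i\circ\tilde{\phi}'$ in $\Sigma_g$. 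The standard prism argument in singular homology shows these two $1$-simplices differ by a boundary, and summing over $i$ finishes this step.

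For additivity, let $[\phi],[\psi]\in B_n(\Sigma_g)$ and let $\tau\in S_n$ be the permutation of the basepoint induced by $[\phi]$, so that $\tilde{\phi}(1)=\tau\cdot(x_1,\ldots,x_n)$. I would argue that the unique lift of the concatenation $\phi\cdot\psi$ starting at $(x_1,\ldots,x_n)$ is $\tilde{\phi}$ followed by the unique lift $\tilde{\psi}^{\tau}$ of $\psi$ based at $\tilde{\phi}(1)$. Since $\pi$ is an $S_n$-cover with $S_n$ acting by permutation of coordinates, $\tilde{\psi}^{\tau}=\tau\cdot\tilde{\psi}$, and hence $p_i\circ\tilde{\psi}^{\tau}=p_{\tau^{-1}(i)}\circ\tilde{\psi}$. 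Reindexing then gives $\sum_i p_i\circ\tilde{\psi}^{\tau}=\sum_i p_i\circ\tilde{\psi}$ as singular $1$-chains. Combining this with the standard homological fact that a path concatenation $\alpha\cdot\beta$ is homologous to the formal sum $\alpha+\beta$ (witnessed by a singular $2$-simplex whose three edges realize $\alpha$, $\beta$, and $\alpha\cdot\beta$) should yield $\omega([\phi\cdot\psi])=\omega([\phi])+\omega([\psi])$.

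The main obstacle I foresee is the bookkeeping in the additivity step: the lift $\tilde{\psi}^{\tau}$ does not begin at the preferred ordering $(x_1,\ldots,x_n)$, so one must invoke the $S_n$-invariance of the total sum $\sum_{i=1}^{n} p_i$ to identify the chain arising from $\tilde{\psi}^\tau$ with the one arising from $\tilde{\psi}$. By comparison, the well-definedness step is essentially formal once homotopy lifting is applied coordinatewise.
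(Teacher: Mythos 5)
Your proof is correct and the well-definedness step follows exactly the paper's argument (lift the homotopy via the homotopy lifting property, post-compose with each $p_i$, and conclude the difference is a boundary). The paper dismisses additivity as ``a straightforward computation,'' whereas you actually carry it out --- correctly, including the key point that the $S_n$-equivariance of the cover gives $\sum_i p_i\circ\tilde{\psi}^{\tau}=\sum_i p_i\circ\tilde{\psi}$ as chains --- so your write-up is, if anything, more complete than the paper's.
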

\begin{proof}
That $\omega$ is a group homomorphism follows from a straightforward computation, so we'll just show that it is well-defined. This is a consequence of the fact that $\pi$ is a covering map. Explicitly, suppose that $\phi_1,\phi_2:[0,1]\rightarrow C_n(\Sigma_g)$ are homotopic loops via $h:[0,1]\times[0,1]\rightarrow C_n(\Sigma_g)$ rel $\{0,1\}$. Fix unique lifts $\tilde{\phi}_1, \tilde{\phi}_2:[0,1]\rightarrow F_n(\Sigma_g)$ with $\tilde{\phi}_1(0)=\tilde{\phi}_2(0)=(x_1, \ldots, x_n)$. We need to show that $\sum_{i=1}^n(p_i\circ\tilde{\phi}_1-p_i\circ\tilde{\phi}_2)\in\mathrm{im}(\partial_2)$. By the Homotopy Lifting Property, we can lift $h$ to a homotopy $\tilde{h}:[0,1]\times[0,1]\rightarrow F_n(\Sigma_g)$ rel $\{0,1\}$ between $\tilde{\phi}_1$ and $\tilde{\phi}_2$. It follows that $p_i\circ\tilde{\phi}_1$ and $p_i\circ\tilde{\phi}_2$ are homotopic, so that $p_i\circ\tilde{\phi}_1-p_i\circ\tilde{\phi}_2\in\mathrm{im}(\partial_2)$, whence the result.
\end{proof}
We call $\omega$ the \textit{total winding number map} and refer to elements $[\sigma]\in\mathrm{ker}(\omega)$ as \textit{balanced braids} (See Figure \ref{balance}). Elements in $P_n(\Sigma_g)\cap\mathrm{ker}(\omega)$ are referred to as \textit{pure balanced braids}. So, balanced braids are those braids whose individual strands ``wind" around with orientations that cancel.

\begin{figure}\label{balance}
 \centering
\begin{tikzpicture}

\draw[gray, ultra thick] (0,0) -- (3,0) -- (3,3) -- cycle;
\draw[gray, ultra thick] (0,0) -- (0,3) -- (3,3) -- cycle;
\draw[gray, ultra thick] (3,0) -- (6,0) -- (6,3) -- cycle;
\draw[gray, ultra thick] (3,0) -- (3,3) -- (6,3) -- cycle;
\draw[gray, ultra thick] (3,0) -- (0,-3) -- (3,-3) -- cycle;
\draw[gray, ultra thick] (0,0) -- (0,-3) -- (3,0) -- cycle;
\draw[gray, ultra thick] (3,0) -- (3,-3) -- (6,0) -- cycle;
\draw[gray, ultra thick] (6,0) -- (3,-3) -- (6,-3) -- cycle;
\draw[gray, ultra thick] (6,0) -- (9,0) -- (9,3) -- cycle;
\draw[gray, ultra thick] (6,0) -- (6,3) -- (9,3) -- cycle;
\draw[gray, ultra thick] (6,0) -- (6,-3) -- (9,0) -- cycle;
\draw[gray, ultra thick] (9,0) -- (6,-3) -- (9,-3) -- cycle;

\draw[gray, ultra thick] (0,-3) -- (3,-3) -- (0,-6) -- cycle;
\draw[gray, ultra thick] (3,-3) -- (0,-6) -- (3,-6) -- cycle;
\draw[gray, ultra thick] (3,-3) -- (3,-6) -- (6,-3) -- cycle;

\draw[gray, ultra thick] (6,-3) -- (6,-6) -- (3,-6) -- cycle;
\draw[gray, ultra thick] (6,-3) -- (9,-3) -- (6,-6) -- cycle;
\draw[gray, ultra thick] (9,-3) -- (9,-6) -- (6,-6) -- cycle;

\draw[->] (0,0) .. controls (3,2.5) .. (5.8, 2.9);
\draw[->] (6,3) .. controls (5,1) .. (3.2, 0.1);
\draw[->] (3,0) .. controls (1.5,-1) .. (0.1, -0.1);

\end{tikzpicture}
\caption{Balanced Braid on a Triangulation of the Torus $\Sigma_1$ (Viewed on its fundamental polygon)}
\end{figure}
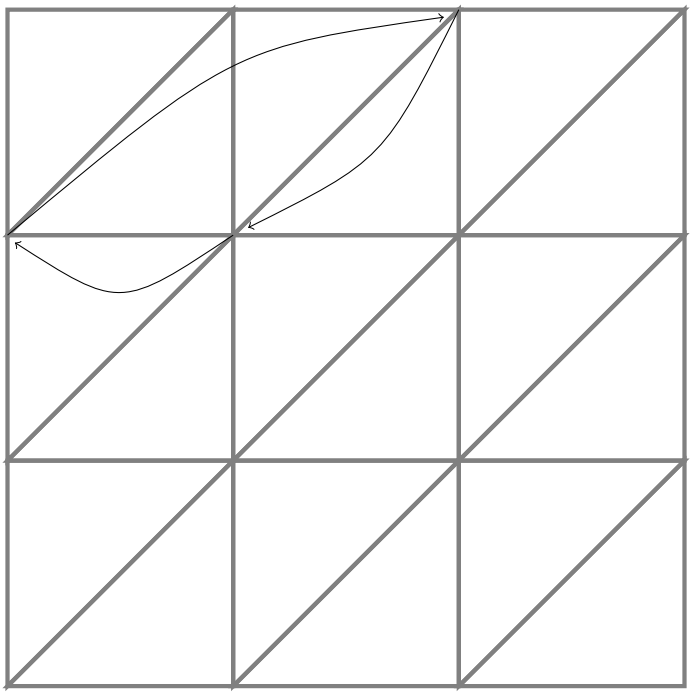

Let $j\in\{1, \ldots, n\}$ and let $B_j(\Sigma_g)$ be based at $[x_1, \ldots, x_j]\in C_n(\Sigma_g)$. For each such $j$, there is a natural group homomorphism
$$B_j(\Sigma_g)\longrightarrow B_n(\Sigma_g)$$
given by ``adding constant strands" to the points $x_{j+1}, \ldots, x_n$. In particular, for $j=1$, we obtain a homomorphism 
$$\upsilon:\pi_1(\Sigma_g)\longrightarrow B_n(\Sigma_g)$$

which fits into a commutative triangle of groups
\begin{displaymath}
    \xymatrix{ B_n(\Sigma_g) \ar[r]^{\omega} & H_1(\Sigma_g;\mathbb{Z}) \\
               \pi_1(\Sigma_g) \ar[u]^{\upsilon}\ar[ur]_{\Phi} }
\end{displaymath}
where $\Phi$ denotes the Hurewicz homomorphism. Since $\Phi$ is surjective, we immediately see that $\omega$ is surjective as well.

\subsection{Braids from Triangulations}
Let $K$ be a finite simplicial complex with geometric realization $|K|$ and let $\theta:|K|\xrightarrow{\simeq}\Sigma_g$ be an arbitrary triangulation of $\Sigma_g$. We fix this triangulation throughout the paper. We will identify $|K|$ with its image in $\Sigma_g$ under $\theta$, so that in particular, a ``vertex" in the triangulation of $\Sigma_g$ refers to a 0-simplex of $K$. Let the basepoint of $B_n(\Sigma_g)$ be the vertex set $K_0$ of our triangulation.
Fix some directed edge $e=(v_0, v_1)$ in the triangulation of $\Sigma_g$ which is a 1-face of two 2-simplices, say $(v_0, v_1, v_2)$ and $(v_0, v_1, v_3)$. We can obtain a braid by rotating $v_0$ and $v_1$ clockwise around $e$ until $v_0$ and $v_1$ have swapped positions whilst remaining in the interior of $(v_0, v_1, v_2)\cup(v_0, v_1, v_3)$ (see Figure \ref{edge}). All the strands starting at points in $K_0 -\{v_0, v_1\}$ remain constant.
\begin{figure}\label{edge}
 \centering
\begin{tikzpicture}

\draw[gray, ultra thick] (0,0) -- (3,0) -- (3,3) -- cycle;
\draw[gray, ultra thick] (0,0) -- (0,3) -- (3,3) -- cycle;
\draw[->] (0,0) .. controls (0.5,2.5) .. (2.8, 2.9);
\draw[->] (3,3) .. controls (2.5,0.5) .. (0.2, 0.1);
 
\end{tikzpicture}
\caption{Edge Braid on a Local Piece of a Triangulation}
\end{figure}
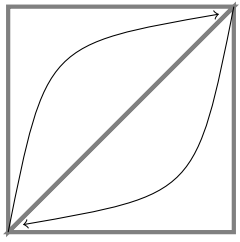
Braids constructed in this fashion are called \textit{edge braids}. Every edge $e=(v_0, v_1)$ in $\Sigma_g$ yields two mutually inverse edge braids, which we denote by $b_e$ (the ``clockwise" edge braid) and $b_e^{-1}$ (the ``counter-clockwise" edge braid). Let $E_n^{\theta}(\Sigma_g)$ denote the subgroup of $B_n(\Sigma_g)$ generated by the edge braids corresponding to the triangulation $\theta$. If the context is clear, we will write $E_n(\Sigma_g)$ instead of $E_n^{\theta}(\Sigma_g)$. We refer to elements of $E_n(\Sigma_g)$ as \textit{quasi-edge braids}. Note that each edge braid vanishes under $\omega$, so that there is an inclusion $E_n(\Sigma_g)\subset\mathrm{ker}(\omega)$.\\
\indent An \textit{edge path of length $k$} in $K$ is a concatenation of directed edges $\lambda=e_1 
\ast e_2\ast\ldots\ast e_k$ in the triangulation of $\Sigma_g$ such that the target of $e_i$ is the source of $e_{i+1}$ for $1\leq i\leq k-1$. $\lambda$ is called an \textit{edge loop} if the target of $e_k$ is the source of $e_1$. We will abuse terminology by referring to edge paths in $K$ as edge paths in $\Sigma_g$. An edge path/loop is \textit{simple} if it is non-self intersecting. A ``vertex" in $\lambda$ refers to a vertex of one of the edges contained in $\lambda$.\\
Recall that for any $v\in K$ there are isomorphisms $E(K,v)\simeq\pi_1(|K|,v)\simeq\pi_1(\Sigma_g,v)$, where $E(K,v)$ denotes the \textit{edge path group} of $K$. Thus, we can naturally consider (edge-equivalence classes of) edge loops in the triangulation of $\Sigma_g$ based at $v$ as (homotopy classes of) loops also based at $v$.

\section{Main Results}
Our main result is Theorem \ref{theorem}, which, given any triangulation of $\Sigma_g$, gives a characterization of $\mathrm{ker}(\omega)$ using edge braids.
\begin{thm}\label{theorem}
Let $\Sigma_g$ be equipped with an arbitrary simplicial triangulation $|K|\xrightarrow{\simeq}\Sigma_g$ and let $n=\# K_0$. Let the surface braid group $B_n(\Sigma_g)$ have basepoint $K_0$. Then the kernel of the total winding number map $\omega:B_n(\Sigma_g)\rightarrow H_1(\Sigma_g; \mathbb{Z})$ is precisely $E_n(\Sigma_g)$.
\end{thm}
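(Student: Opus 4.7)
The plan is to establish the non-trivial inclusion $\mathrm{ker}(\omega)\subseteq E_n(\Sigma_g)$, since the reverse inclusion was noted after the definition of edge braids. I would first reduce the problem to the pure braid group. Every edge braid $b_e$ induces a transposition under the standard projection $f:B_n(\Sigma_g)\to S_n$, and since the 1-skeleton of $K$ is connected, such transpositions generate all of $S_n$. Hence the restriction $f|_{E_n(\Sigma_g)}$ is surjective. Given $[\phi]\in\mathrm{ker}(\omega)$, I would choose some $\beta\in E_n(\Sigma_g)$ with $f(\beta)=f([\phi])$; then $[\phi]\beta^{-1}\in P_n(\Sigma_g)\cap\mathrm{ker}(\omega)$. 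Since $E_n(\Sigma_g)$ is a subgroup containing $\beta$, it suffices to prove $P_n(\Sigma_g)\cap\mathrm{ker}(\omega)\subseteq E_n(\Sigma_g)$.

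For $[\phi]\in P_n(\Sigma_g)\cap\mathrm{ker}(\omega)$, each strand $p^i\circ\tilde\phi$ is a loop at $v_i$, and by the isomorphism $E(K,v_i)\simeq\pi_1(\Sigma_g,v_i)$ each strand class can be represented by an edge loop $\lambda_i$ at $v_i$. I would then proceed by induction on the total edge length $\ell=\sum_{i=1}^n\mathrm{length}(\lambda_i)$. For the base case $\ell=0$ every strand is null-homotopic, and I would argue that such pure braids are generated by the ``small-loop'' pure braid generators $A_{ij}$ of $P_n(\Sigma_g)$, each of which is a conjugate of a squared edge braid $b_e^2$ and hence lies in $E_n(\Sigma_g)$. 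For the inductive step, I would pick a strand with $\lambda_i$ non-empty, say starting along an edge $e=(v_i,v_j)$, and multiply $[\phi]$ by an appropriate product of edge braids that cancels $e$ from the front of $\lambda_i$ while restoring the pure-braid condition.

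The main obstacle is in the inductive step: multiplication by $b_e^{\pm 1}$ not only shortens $\lambda_i$ but also swaps $v_i,v_j$ and drags strand $j$'s loop, potentially lengthening $\lambda_j$ and affecting the permutation of the endpoints. Making this precise requires careful bookkeeping of how the edge loops $\{\lambda_i\}$ are simultaneously modified, and showing that a sequence of edge-braid multiplications can always be chosen so that $\ell$ strictly decreases modulo $E_n(\Sigma_g)$. The balance condition $\sum_i[\lambda_i]=0$ in $H_1(\Sigma_g;\mathbb{Z})$ is essential here: it guarantees that no ``homological residue'' accumulates during the reduction, so that when the $\lambda_i$ are all reduced to trivial edge loops the remaining pure braid is genuinely null-homotopic strand-by-strand and can be absorbed into the base case.
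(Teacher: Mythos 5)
Your opening reduction is sound and matches the paper's (Proposition \ref{gen}): edge braids surject onto $S_n$ because the 1-skeleton of $K$ is connected, so the theorem reduces to showing $P_n(\Sigma_g)\cap\mathrm{ker}(\omega)\subseteq E_n(\Sigma_g)$. The rest of the argument, however, has two genuine gaps. First, the inductive step is not just bookkeeping: the quantity $\ell=\sum_i\mathrm{length}(\lambda_i)$ cannot in general be strictly decreased by multiplying by edge braids. The balance condition constrains only the \emph{sum} of the classes $[\lambda_i]$, not each one, so a strand may carry a homologically nontrivial loop (say $[\lambda_1]=a$, $[\lambda_2]=-a$, all others trivial). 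Any quasi-edge braid is itself balanced, so it can only remove the loop at $x_1$ by depositing an inverse loop at some other vertex; and a single null-homologous strand loop such as $f_1f_2f_1^{-1}f_2^{-1}$ has positive length but cannot be shortened one edge at a time while the other strands' loops are kept fixed. What is needed is exactly the paper's transfer construction (Lemma \ref{super}: a quasi-edge braid whose vertex loop at $v_i$ is a prescribed simple edge loop and whose vertex loop at $v_j$ is its inverse) together with explicit commutator realizations (Lemma \ref{comm}); with those in hand the correct reduction is not to $\ell=0$ but to a braid with a single nontrivial vertex loop (via Proposition \ref{one}), after which the Hurewicz theorem finishes the job (Proposition \ref{two}).

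Second, your base case is where the real content lives, and the argument given for it does not close. A pure braid all of whose strands are null-homotopic in $\Sigma_g$ is generally a highly nontrivial element of $P_n(\Sigma_g)$; the assertion that such braids are generated by the classical generators $A_{ij}$ is a version of Goldberg's theorem, which yields the \emph{normal closure} of the disk pure braid group inside $P_n(\Sigma_g)$, not the subgroup generated by finitely many chosen $A_{ij}$. Since $E_n(\Sigma_g)$ is a fixed subgroup rather than a normal one by construction, the phrase ``conjugate of $b_e^2$'' only places an element in $E_n(\Sigma_g)$ when the conjugator is itself a quasi-edge braid, whereas the conjugators arising from normal generation are arbitrary elements of $P_n(\Sigma_g)$. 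So the base case as stated assumes essentially what must be proved. The paper sidesteps this by never passing through ``all strands null-homotopic'': it isolates the residual braiding in $\pi_1(\Sigma_g-C_{[\sigma]},x_j)$ and shows directly that the puncture generators and the commutators of the standard generators are quasi-edge braids.
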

Since $\omega$ is surjective, Theorem \ref{theorem} implies that any triangulation of $\Sigma_g$ induces a short exact sequence of groups
$$
1\longrightarrow E_n(\Sigma_g)\longrightarrow B_n(\Sigma_g)\longrightarrow H_1(\Sigma_g;\mathbb{Z})\longrightarrow 0.
$$
\begin{cor}
\textit{$E_n(\Sigma_g)$ contains the commutator subgroup $[B_n(\Sigma_g),B_n(\Sigma_g)]$.}
\end{cor}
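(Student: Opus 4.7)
The plan is essentially immediate from Theorem \ref{theorem} combined with the fact that the quotient group $H_1(\Sigma_g;\mathbb{Z})$ is abelian. The only real content is a standard group-theoretic observation, namely that a normal subgroup $N\trianglelefteq G$ contains $[G,G]$ if and only if $G/N$ is abelian.

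More concretely, I would argue as follows. By Theorem \ref{theorem}, the total winding number map $\omega:B_n(\Sigma_g)\to H_1(\Sigma_g;\mathbb{Z})$ has kernel exactly $E_n(\Sigma_g)$. Since $H_1(\Sigma_g;\mathbb{Z})\simeq\mathbb{Z}^{2g}$ is abelian, for any two elements $[\alpha],[\beta]\in B_n(\Sigma_g)$ we have
\begin{equation*}
\omega([\alpha][\beta][\alpha]^{-1}[\beta]^{-1}) = \omega([\alpha]) + \omega([\beta]) - \omega([\alpha]) - \omega([\beta]) = 0,
\end{equation*}
so every commutator lies in $\mathrm{ker}(\omega)=E_n(\Sigma_g)$. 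Since $E_n(\Sigma_g)$ is a subgroup, it contains the subgroup generated by all commutators, namely $[B_n(\Sigma_g),B_n(\Sigma_g)]$.

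There is no real obstacle here; the corollary is simply a restatement of the universal property of abelianization applied to $\omega$, using the hard work that went into Theorem \ref{theorem} to identify $\mathrm{ker}(\omega)$ with $E_n(\Sigma_g)$.
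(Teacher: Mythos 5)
Your proof is correct and is essentially the same argument as the paper's: both deduce from Theorem \ref{theorem} that $B_n(\Sigma_g)/E_n(\Sigma_g)\simeq H_1(\Sigma_g;\mathbb{Z})$ is abelian and conclude $[B_n(\Sigma_g),B_n(\Sigma_g)]\subset E_n(\Sigma_g)$; you merely spell out the commutator computation that the paper leaves implicit.
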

\begin{proof}
Theorem \ref{theorem} supplies an isomorphism of groups $H_1(\Sigma_g;\mathbb{Z})\simeq B_n(\Sigma_g)/E_n(\Sigma_g)$. Since $H_1(\Sigma_g;\mathbb{Z})$ is abelian, we have that $[B_n(\Sigma_g),B_n(\Sigma_g)]\subset E_n(\Sigma_g)$.
\end{proof}
\begin{rem}
By Heawood's bounds [3], the number of vertices $n$ of the simplicial complex $K$ used in any triangulation of $\Sigma_g$ for $g\neq 2$ must satisfy  
\begin{equation}
    n\geq \frac{7+\sqrt{49-24\chi(\Sigma_g)}}{2}=\frac{7+\sqrt{1+48g}}{2}
\end{equation}
where $\chi(\Sigma_g)=2-2g$ denotes the Euler characteristic of $\Sigma_g$. In particular, Theorem \ref{theorem} may apply for $n$ sufficiently large relative to the genus $g$.
\end{rem}

The remainder of this paper is dedicated to proving Theorem \ref{theorem}. The genus $g=0$ case is well-known; we provide a proof for completeness.
\subsection{Proof of Theorem \ref{theorem} for $g=0$}\label{prooooof}
Since $H_1(\Sigma_0;\mathbb{Z})\simeq0$, the genus $0$ case is the statement that $B_n(\Sigma_0)=E_n(\Sigma_0)$ given any triangulation of $\Sigma_0$, where $n$ is the number of vertices in the triangulation. Recall that $B_n(\Sigma_0)$ is generated by the ``transposition" braids $\sigma_i$ for $1\leq i\leq n-1$ analogous to the generators of Artin's braid group $B_n$. Thus, it suffices to show that each $\sigma_i$ is a quasi-edge braid. Let $x_i$ and $x_{i+1}$ be the elements of the basepoint of $B_n(\Sigma_0)$ that $\sigma_i$ swaps. Fix a simple edge path $\delta=e_{i_1}\ast\ldots\ast e_{i_n}$ from $x_i$ to $x_{i+1}$. Then the quasi-edge braid
\begin{equation}\label{cool}
q_{\delta}=b_{e_{i_1}}b_{e_{i_2}}\ldots b_{e_{i_n}}b_{e_{i_{n-1}}}^{-1}b_{e_{i_{n-2}}}^{-1}\ldots b_{e_{i_1}}^{-1}
\end{equation}
is equal to $\sigma_i$ (it swaps $x_{i}$ and $x_{i+1}$ while leaving all other elements of the basepoint fixed), so we're done.\\
For any edge path $p$ in the triangulation of $\Sigma_g$, $q_{p}$ denotes the quasi-edge braid constructed in manner of Equation (5).
\begin{exmp}
The most basic example of of the genus $0$ case of Theorem \ref{theorem} is when the triangulation is the canonical homeomorphism $\partial(\Delta^3)\simeq\Sigma_0$, where $\partial(\Delta^3)$ denotes the boundary of the standard 3-simplex. By the assumptions of the theorem, the elements of the basepoint of $B_4(\Sigma_0)$ are the four endpoints of $\Delta^3$, all of which are pairwise adjacent. Hence, each generator of $B_4(\Sigma_0)$ is actually an edge braid.
\end{exmp}
\subsection{Reduction of Theorem \ref{theorem} to Pure Balanced Braids on $\Sigma_g$}
We start with the following observation.
\begin{prop}\label{surj}
The restriction $f|_{E_n(\Sigma_g)}:E_n(\Sigma_g)\rightarrow S_n$ is surjective.
\end{prop}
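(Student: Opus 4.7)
The plan is to reduce to the fact that $S_n$ is generated by transpositions: it suffices to show that for any pair of distinct basepoint vertices $v, v' \in K_0$, the transposition $(v\,v')$ lies in the image of $f|_{E_n(\Sigma_g)}$, since such transpositions generate $S_n$ as $v, v'$ range over $K_0$.

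Since $|K|$ is a connected surface, its $1$-skeleton is connected, so there exists a simple edge path $\delta = e_1 \ast e_2 \ast \cdots \ast e_k$ in $K$ from $v$ to $v'$. Let its successive vertices be $v = v_0, v_1, \ldots, v_{k-1}, v_k = v'$. Mimicking Equation (5) of Section \ref{prooooof}, I would form the quasi-edge braid
\[
q_\delta \;=\; b_{e_1}\,b_{e_2}\cdots b_{e_k}\,b_{e_{k-1}}^{-1}\cdots b_{e_1}^{-1} \;\in\; E_n(\Sigma_g).
\]

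The next step is to verify that $f(q_\delta) = (v\,v')$. Each edge braid $b_{e_\ell}$ induces the transposition $\tau_\ell = (v_{\ell-1}\,v_\ell)$ on $K_0$, so $f(q_\delta) = \tau_1\tau_2\cdots\tau_k\,\tau_{k-1}\cdots\tau_1$ as an element of $S_n$. A short induction on $k$ using the elementary identity $(ab)(bc)(ab) = (ac)$ shows that this product collapses to $(v_0\,v_k) = (v\,v')$; meanwhile, every basepoint vertex not lying on $\delta$ is fixed, since no edge braid appearing in the word involves it. Hence $(v\,v') \in f(E_n(\Sigma_g))$.

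There is essentially no genuine obstacle beyond the combinatorial computation of the permutation induced by $q_\delta$, which is the same computation that drives the $g=0$ case in Section \ref{prooooof} and is completely insensitive to the global topology of $\Sigma_g$: edge braids act locally on the triangulation, and connectedness of the $1$-skeleton of $K$ — a feature of every triangulated surface — is all that is required to produce the path $\delta$.
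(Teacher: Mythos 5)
Your proposal is correct and follows essentially the same route as the paper: reduce to transpositions, connect the two vertices by a simple edge path in the $1$-skeleton, and apply $f$ to the quasi-edge braid $q_\delta$ of Equation (5). The only difference is that you spell out the permutation computation $\tau_1\cdots\tau_k\tau_{k-1}\cdots\tau_1=(v_0\,v_k)$, which the paper leaves implicit by citing the $g=0$ argument.
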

\begin{proof}
It suffices to show each transposition $s$ in $S_n$ is hit by $f|_{E_n(\Sigma_g)}$. Let $(x_1, \ldots, x_n)$ be the basepoint of $B_n(\Sigma_g)$. The proof is the same as in the $g=0$ case. Explicitly, let $s$ swap $i$ and $j$, where we assume without loss of generality that $1\leq i< j\leq n$. Fix a simple edge path $\lambda$ from $x_i$ to $x_j$. Then $f(q_{\lambda})=s$, so we're done.
\end{proof}

Let $l:S_n\longrightarrow\mathbb{Z}_{\geq 0}$ denote the \textit{length function} of $S_n$ relative to the generating transpositions $s_i\in S_n$ of the usual Coxeter presentation. $l(\gamma)$ is defined to be the minimum number of transpositions required to express the permutation $\gamma\in S_n$. The following fact now is an easy consequence of Proposition \ref{surj}. 
\begin{prop}\label{gen}
Every element in $B_n(\Sigma_g)$ can be written as a product of pure braids and edge braids.
\end{prop}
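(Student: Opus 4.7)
The plan is to exploit the surjectivity statement of Proposition \ref{surj} together with the short exact sequence
$$
1 \longrightarrow P_n(\Sigma_g) \longrightarrow B_n(\Sigma_g) \xrightarrow{\ f\ } S_n \longrightarrow 1
$$
to split off the permutation part of any braid as a quasi-edge braid, leaving a pure braid behind.

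Concretely, given any $[\phi] \in B_n(\Sigma_g)$, I would first look at its underlying permutation $\sigma = f([\phi]) \in S_n$. By Proposition \ref{surj}, the restriction $f|_{E_n(\Sigma_g)}$ is surjective, so there exists some quasi-edge braid $[\psi] \in E_n(\Sigma_g)$ with $f([\psi]) = \sigma$. Form the product $[\phi]\cdot[\psi]^{-1}$. Since $f$ is a group homomorphism,
$$
f([\phi]\cdot[\psi]^{-1}) = \sigma\sigma^{-1} = e,
$$
so $[\phi]\cdot[\psi]^{-1}$ lies in $\ker(f) = P_n(\Sigma_g)$. Setting $[\rho] = [\phi]\cdot[\psi]^{-1}$, we obtain the decomposition
$$
[\phi] = [\rho] \cdot [\psi],
$$
in which $[\rho]$ is a pure braid and $[\psi]$ is, by definition of $E_n(\Sigma_g)$, a word in edge braids. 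Expanding $[\psi]$ in its generators then expresses $[\phi]$ as a product of one pure braid followed by a string of edge braids, as required.

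There is no real obstacle here: once Proposition \ref{surj} is in hand, the argument is essentially the standard section-of-the-permutation trick applied to the canonical short exact sequence of the braid group. The only thing worth noting is that the length function $l$ introduced just before the statement is not actually needed for the existence claim; it presumably controls the number of edge braid factors (one could take $[\psi] = q_{\lambda_1}\cdots q_{\lambda_{l(\sigma)}}$ where $\sigma = s_{i_1}\cdots s_{i_{l(\sigma)}}$ is a reduced expression and each $q_{\lambda_k}$ realizes $s_{i_k}$ as in Proposition \ref{surj}), which may be what the authors intend to use in subsequent arguments.
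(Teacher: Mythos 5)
Your proof is correct and rests on the same key input as the paper's, namely the surjectivity of $f|_{E_n(\Sigma_g)}$ from Proposition \ref{surj}; the paper merely packages this as an induction on $l_{\star}([\sigma])=l(f([\sigma]))$, peeling off one transposition at a time, whereas you lift the entire permutation at once and write $[\phi]=([\phi][\psi]^{-1})[\psi]$ directly. Your one-step version is a clean streamlining of the same argument, and your observation that the length function is not actually needed for the bare existence statement is accurate.
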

\begin{proof}
Define a function $$l_{\star}:B_n(\Sigma_g)\longrightarrow\mathbb{Z}_{\geq 0}$$ $$[\sigma]\mapsto l(f([\sigma])).$$ This is clearly well-defined. We induct on $l_{\star}([\sigma])$. The base case $l_{\star}([\sigma])=0$ is clear since $[\sigma]$ must be a pure braid. Suppose the result holds for all $[\sigma]$ with $l_{\star}([\sigma])=k$. To complete the inductive step, it suffices to show that any braid $[\sigma]$ of length $k+1$ can be multiplied by some quasi-edge braid such that the resulting braid has length $k$. Write $f([\sigma])=s_{i_1} s_{i_2}\ldots s_{i_{k+1}}$ for transpositions $s_{i_j}\in S_n$, $1\leq j\leq k+1$. By Proposition \ref{surj}, we may choose some quasi-edge braid $[\mu]$ such that $f([\mu])=s_{i_{k+1}}^{-1}$. Thus, $l_{\star}([\sigma\mu])=k$, which completes the proof.
\end{proof}
By Proposition \ref{gen}, it is sufficient to show that the subgroup $\mathrm{ker}(\omega|_{P_n(\Sigma_g)})=P_n(\Sigma_g)\cap\mathrm{ker}(\omega)$ of $\mathrm{ker}(\omega)$ is generated by edge braids in order to deduce Theorem \ref{theorem}. This is a useful reduction since we may think of pure braids as collections of homotopy classes of loops on $\Sigma_g$.

\subsection{Some Properties of $E_n(\Sigma_g)$}\label{braids}
We prove some results concerning which braids on $\Sigma_g$ are quasi-edge braids and describe some relations that the edge braids satisfy.

\subsubsection{Conjugation action of $E_n(\Sigma_g)$}\label{conjj}
We briefly describe a property of the conjugation action of certain quasi-edge braids which is relevant to the proof of Lemma \ref{super}. In particular, conjugation by certain quasi-edge braids has a useful property when the pure braid being conjugated has exactly one non-trivial vertex loop.  Let $\varphi:B_n(\Sigma_g)\rightarrow\mathrm{Aut}P_n(\Sigma_g)$ denote the conjugation homomorphism and let $[\gamma]\in P_n(\Sigma_g)$ be a pure braid with exactly one non-trivial vertex loop, say $[\gamma]_{x_i}$ for some $i\in\{1, \ldots, n\}$. \\
 \indent Let $x_j$ be a vertex adjacent to $x_i$ in the triangulation of $\Sigma_g$ and let $e$ be an edge connecting them. By a direct computation, we see that the braid $b_{e}[\gamma]b_{e}^{-1}$ still has exactly one non-trivial vertex loop, except that it is located at $x_j$ instead of $x_i$, so that conjugating by $b_{e}$ ``moves" the loop at $x_i$ to $x_j$. Suppose that $x_i$ and $x_j$ are vertices in the triangulation that are not necessarily adjacent. Fix a simple edge path $\eta$ from $x_i$ to $x_j$. Extrapolating from the above case, we see that $[\gamma]$ conjugated by the quasi-edge braid $q_\eta$ (constructed in the fashion of Equation (5) of Section \ref{prooooof}) has exactly one non-trivial vertex loop located at $x_j$.\\
\indent For any braid $[\alpha]\in B_n(\Sigma_g)$, let $C_{[\alpha]}$ denote the subset of the basepoint $\{x_1, \ldots, x_n\}$ consisting of the points whose induced vertex loops are trivial. Since $[\gamma]$ has only one non-trivial vertex loop, it can naturally be regarded as element of $\pi_1(\Sigma_g-C_{[\gamma]},x_i)$. We will not make a distinction between such braids and elements of $\pi_1(\Sigma_g-C_{[\gamma]},x_i)$ for the rest of the paper. Similarly, $q_\eta[\gamma]q_\eta^{-1}$ can be seen as an element of $\pi_1(\Sigma_g-C_{{q_{\eta}[\gamma]q_{\eta}^{-1}}},x_j)$. The above discussion can be re-phrased via the following proposition.

\begin{prop}
Let $x_i$ and $x_j$ vertices in the triangulation of $\Sigma_g$ and let $\lambda$ be a simple edge path from $x_i$ to $x_j$. Then the conjugation map $\varphi(q_{\lambda})\in\mathrm{Aut}B_n(\Sigma_g)$ restricts to an isomorphism $$\pi_1(\Sigma_g-C_{[\gamma]},x_i)\xrightarrow{\simeq}\pi_1(\Sigma_g-C_{q_{\lambda}[\gamma]q_{\lambda}^{-1}},x_j)$$
\end{prop}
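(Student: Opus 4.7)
The plan is to upgrade the informal observation made in the paragraph immediately preceding the proposition into a precise group-theoretic statement. The key input is that conjugation by a single edge braid $b_e$, where $e$ connects adjacent vertices $x_a$ and $x_b$, carries any pure braid with exactly one non-trivial vertex loop at $x_a$ to a pure braid with exactly one non-trivial vertex loop at $x_b$. Iterating this along the edges making up $\lambda$ (so along the factors of $q_{\lambda}$ built as in Equation (5)) will show that $\varphi(q_{\lambda})$ moves the ``location'' of a single non-trivial vertex loop from $x_i$ to $x_j$ while leaving the other strands constant.

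First I would make the identification of each $\pi_1(\Sigma_g-C_{[\alpha]},x_*)$ with a subgroup of $P_n(\Sigma_g)$ precise. Concretely, a pure braid with exactly one non-trivial vertex loop, located at some vertex $x_\ell$, is determined by the homotopy class of that loop in $\Sigma_g$ relative to the other $n-1$ basepoint vertices, which is exactly an element of $\pi_1(\Sigma_g-\{x_k:k\neq\ell\},x_\ell)=\pi_1(\Sigma_g-C_{[\alpha]},x_\ell)$. This gives canonical injective homomorphisms $\iota_i$ and $\iota_j$ into $B_n(\Sigma_g)$ whose images are precisely the ``single non-trivial strand'' subgroups at $x_i$ and $x_j$ respectively.

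Next I would verify that $\varphi(q_{\lambda})$ sends the image of $\iota_i$ into the image of $\iota_j$, using the edge-by-edge argument above. This yields a well-defined group homomorphism
\[
\Psi : \pi_1(\Sigma_g-C_{[\gamma]},x_i) \longrightarrow \pi_1(\Sigma_g-C_{q_{\lambda}[\gamma]q_{\lambda}^{-1}},x_j),
\]
which is automatically a homomorphism since it is the restriction of the group automorphism $\varphi(q_{\lambda})$ of $B_n(\Sigma_g)$. To see it is an isomorphism, I would observe that $q_{\lambda}^{-1}=q_{\bar\lambda}$ (the quasi-edge braid of the reversed path) performs the symmetric construction: by the same edge-by-edge argument it moves a single non-trivial vertex loop at $x_j$ back to $x_i$, so $\varphi(q_{\lambda}^{-1})$ supplies a two-sided inverse to $\Psi$.

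The main obstacle is bookkeeping rather than depth: one must be careful that in the edge-by-edge step, ``conjugation by $b_e$ moves the loop'' is made precise as an equality of homotopy classes in the right punctured surface, because the strand of $b_e$ that starts at $x_a$ does not literally translate a loop pointwise but threads it across the triangle pair defining $b_e$. Verifying that this threading induces exactly the change-of-basepoint/change-of-puncture-set isomorphism between $\pi_1(\Sigma_g-\{x_k:k\neq a\},x_a)$ and $\pi_1(\Sigma_g-\{x_k:k\neq b\},x_b)$ is the step that requires the most care; once it is established for a single edge, induction along $\lambda$ finishes the argument.
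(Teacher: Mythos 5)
Your proposal is correct and follows essentially the same route the paper takes: the paper also derives this proposition by the single-edge computation (conjugation by $b_e$ relocates the unique non-trivial vertex loop to the adjacent vertex) followed by iteration along the factors of $q_{\lambda}$, and your use of $\varphi(q_{\lambda}^{-1})$ as the two-sided inverse is the natural way to make the ``isomorphism'' claim precise. The paper in fact offers no further proof beyond that preceding discussion, so your write-up, which also flags the change-of-basepoint/change-of-puncture subtlety in the single-edge step, is if anything more careful than the source.
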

\subsubsection{Quasi-Edge Braid Constructions and Relations}

\begin{lem}\label{super}
Let $\Lambda=e_1\ast \ldots\ast e_k$ be a simple edge loop in $\Sigma_g$ and fix two vertices $v_i, v_j$ contained in $\Lambda$. Then there exists a quasi-edge braid $w(v_i,v_j)$ such that the vertex loop $w_{v_i}$ is homotopic to $\Lambda$ and $w_{v_j}^{-1}$ is homotopic to $\Lambda$.
\end{lem}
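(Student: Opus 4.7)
The plan is to construct $w(v_i, v_j)$ directly by cutting the simple loop $\Lambda$ at the two marked vertices. I split $\Lambda = \alpha \ast \beta$, where $\alpha$ is the edge subpath of $\Lambda$ from $v_i$ to $v_j$ and $\beta$ is the remaining subpath from $v_j$ back to $v_i$. Since $\Lambda$ is simple, $\alpha$ and $\beta$ are themselves simple edge paths whose interiors are disjoint; they meet only at $v_i$ and $v_j$.

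Next I set $w(v_i,v_j) := q_{\alpha} \cdot q_{\beta}$, using the quasi-edge braid construction of Equation (5) in Section \ref{prooooof}. Recall the key feature of $q_{\delta}$: it swaps the endpoints of $\delta$ by sending the strand at the source along a path homotopic to $\delta$ and the strand at the target along $\delta^{-1}$, while leaving interior vertices fixed up to null-homotopic wobble (each interior strand traverses an edge and then immediately retraces it). Since $q_\alpha$ and $q_\beta$ both induce the transposition $(v_i\;v_j)$ under $f$, the product $w$ lies in $P_n(\Sigma_g) \cap E_n(\Sigma_g)$, so its vertex loops at $v_i$ and $v_j$ are well-defined and $w$ is in particular a quasi-edge braid.

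I then trace the relevant strands to verify the two claimed vertex loops. The strand at $v_i$ first follows $\alpha$ under $q_\alpha$ to arrive at $v_j$, and then follows $\beta$ under $q_\beta$ to return to $v_i$, giving $w_{v_i}$ homotopic to $\alpha \ast \beta = \Lambda$. Symmetrically, the strand at $v_j$ first follows $\alpha^{-1}$ under $q_\alpha$ to $v_i$ and then follows $\beta^{-1}$ under $q_\beta$ back to $v_j$, so $w_{v_j}$ is homotopic to $\alpha^{-1} \ast \beta^{-1}$ and hence $w_{v_j}^{-1}$ is homotopic to $\beta \ast \alpha$, which is $\Lambda$ read starting at $v_j$ and so is freely homotopic to $\Lambda$ on $\Sigma_g$.

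The argument is essentially combinatorial bookkeeping. The main step requiring care is verifying that the vertex loops at interior vertices of $\alpha$ and $\beta$ are trivial, so that $w$ is genuinely a pure braid whose only nontrivial vertex loops are the two specified ones; this follows from the structure of $q_\delta$ together with the fact that simplicity of $\Lambda$ forces the interiors of $\alpha$ and $\beta$ to be disjoint, so the two sub-products do not interfere at intermediate points.
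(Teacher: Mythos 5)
Your construction is correct, but it is genuinely different from the paper's. The paper first forms $\Omega=b_{e_i}b_{e_{i+1}}\cdots b_{e_{i-1}}$ by walking once around all of $\Lambda$ (so the strand at $v_i$ traverses the whole loop), then appends the reversed product $b_{e_{i-1}}\cdots b_{e_{i+1}}$ to obtain a pure braid $\widehat{\Omega}$ whose only nontrivial vertex loops sit at the \emph{adjacent} pair $v_i,v_{i+1}$, and finally transports the second loop from $v_{i+1}$ to $v_j$ by conjugating with $q_\mu$, invoking the conjugation action described in Section \ref{conjj}. You instead cut $\Lambda$ at the two prescribed vertices, write $\Lambda=\alpha\ast\beta$, and take $q_\alpha q_\beta$; tracing strands gives $\alpha\ast\beta\simeq\Lambda$ at $v_i$ and $\bar{\alpha}\ast\bar{\beta}$ at $v_j$, whose inverse is $\beta\ast\alpha$, i.e.\ $\Lambda$ read from $v_j$. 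This buys you a shorter argument with no conjugation step and no bookkeeping about which strands $\widehat{\Omega}$ fixes, at the cost of having to note (as you do) that simplicity of $\Lambda$ keeps the interiors of $\alpha$ and $\beta$ disjoint so the two transposition braids do not interfere, and that ``homotopic to $\Lambda$'' at $v_j$ must be read as free homotopy (or homotopy after cyclic rotation of the basepoint) --- an imprecision the paper's own statement and proof share, since $\widehat{\Omega}_{v_{i+1}}$ is likewise based at a different vertex than $\Lambda$. The degenerate case where $v_i$ and $v_j$ are adjacent, so that $\alpha$ is a single edge and $q_\alpha=b_{e}$, is also covered by your construction. Both proofs establish the lemma; yours is arguably the cleaner route.
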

\begin{proof}
For ease of notation, the indices of the vertices and edges in $\Theta$ will be taken modulo $k$ (i.e $v_k=v_0$).
Let $e_i=(v_i, v_{i+1})$ and let $\Omega$ denote the quasi-edge braid
\begin{equation}
    \Omega=b_{e_i}b_{e_{i+1}}\ldots b_{e_{i-2}}b_{e_{i-1}}.
\end{equation} 
It is clear that $\Omega_{v_i}$ is homotopic to $\Lambda$. Then $\widehat{\Omega}=\Omega b_{e_{i-1}}b_{e_{i-2}}\ldots b_{e_{i+2}}b_{e_{i+1}}$ is such that $\widehat{\Omega}_{v_{i+1}}^{-1}$ is also homotopic to $\Lambda$. Furthermore, the only non-trivial vertex loops of $\widehat{\Omega}$ are located at $v_i$ and $v_{i+1}$. Let $\mu$ be the unique edge path from $v_{i+1}$ to $v_j$ that does not contain $v_i$ and is a subset of $\Lambda$. Then $w(v_i, v_j)=q_{\mu}\widehat{\Omega}q_{\mu}^{-1}$ gives the desired braid via the discussion in Section \ref{conjj}.


\end{proof}



\begin{lem}\label{cool}
Let $\lambda =e_1\ast \ldots\ast e_k$ be a simple edge path from $v_1$ to $v_{k+1}$, where $e_i=(v_i, v_{i+1})$ for $1\leq i\leq k$. Then there exists a quasi-edge braid such that the induced vertex loop at $v_1$ has a winding number of one about $v_{k+1}$ and zero around any other element in the basepoint of $B_n(\Sigma_g)$ (which are precisely the other vertices in the triangulation).

\end{lem}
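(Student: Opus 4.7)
The plan is to build the desired braid explicitly as
$$ W \;=\; b_{e_1} b_{e_2} \cdots b_{e_{k-1}} \cdot b_{e_k}^{2} \cdot b_{e_{k-1}}^{-1} \cdots b_{e_2}^{-1} b_{e_1}^{-1}, $$
which is visibly a product of edge braids and their inverses, hence lies in $E_n(\Sigma_g)$. First I would verify that $W$ is a pure braid by computing its image in $S_n$: this is
$$ (v_1\, v_2)(v_2\, v_3)\cdots(v_{k-1}\, v_k)\,(v_k\, v_{k+1})^{2}\,(v_{k-1}\, v_k)\cdots(v_1\, v_2), $$
which collapses to the identity since the central square vanishes and the remaining word is its own inverse.

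Next I would trace the strand of $W$ starting at $v_1$ through each factor. Since each $b_{e_i}$ is a half-rotation of $v_i$ and $v_{i+1}$ around the midpoint $m_i$ of $e_i$, a direct bookkeeping shows that this strand travels $v_1 \to v_2 \to \cdots \to v_k$ via successive half-rotations during the prefix $b_{e_1} \cdots b_{e_{k-1}}$, performs a full rotation around $m_k$ during $b_{e_k}^{2}$ (returning to $v_k$ after encircling $v_{k+1}$ once, staying inside the two triangles sharing $e_k$), and finally retraces in reverse each matching half-rotation during the suffix $b_{e_{k-1}}^{-1} \cdots b_{e_1}^{-1}$. Writing $\eta$ for the outgoing path $v_1 \to v_k$ and $\mu$ for the full rotation around $m_k$, the vertex loop of $W$ at $v_1$ then equals $\eta \cdot \mu \cdot \eta^{-1}$ up to reparametrization.

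Finally I would read off the winding numbers. Since $\eta \cdot \eta^{-1}$ is null-homotopic rel endpoints, the loop $\eta \mu \eta^{-1}$ is freely homotopic to $\mu$ and therefore homologous to $\mu$ in $H_1$ of $\Sigma_g$ with the other basepoint vertices removed. The loop $\mu$ lies inside the union of the two triangles adjacent to $e_k$, and the open interior of this union contains no vertex of the triangulation at all, so $\mu$ winds once around $v_{k+1}$ and zero times around every other vertex. The same therefore holds for the vertex loop of $W$ at $v_1$, which is exactly the claim.

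The main obstacle is justifying the palindrome-style cancellation in the second paragraph, i.e.\ showing that the strand's return journey during each $b_{e_i}^{-1}$ really does reverse the half-rotation it executed during the matching $b_{e_i}$. This requires tracking, at every stage of $W$, which strand sits at each position and verifying that the conjugation layout $A \cdot b_{e_k}^{2} \cdot A^{-1}$ with $A = b_{e_1} \cdots b_{e_{k-1}}$ sends strand $v_1$ outward along the $b_{e_i}$'s and then back along the reversed $b_{e_i}^{-1}$'s; once this bookkeeping is in hand, the homology computation in the last paragraph is immediate.
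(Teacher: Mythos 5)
Your construction is essentially the paper's: the paper takes $w_{\lambda}=b_{e_1}\cdots b_{e_{k-1}}b_{e_k}^{-2}b_{e_{k-1}}\cdots b_{e_1}$ and asserts the conclusion in one line, whereas you take the genuine conjugate $b_{e_1}\cdots b_{e_{k-1}}b_{e_k}^{2}b_{e_{k-1}}^{-1}\cdots b_{e_1}^{-1}$; the two differences (the sign of the central full twist, and whether the return word is inverted) only change the orientation of the winding and insert null-homotopic circuits around edge midpoints, neither of which affects the claimed winding numbers. Your strand-tracing and the homological reduction to the central full twist supply exactly the verification the paper omits, so the proposal is correct and follows the same route.
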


\begin{proof}
The quasi-edge braid 
\begin{equation}
w_{\lambda}=b_{e_1}b_{e_2}\ldots b_{e_{k-1}}b_{e_k}^{-2}b_{e_{k-1}}\ldots b_{e_2}b_{e_1}
\end{equation}
gives the desired braid. The fact that $w_{\lambda}$ has trivial winding number around any other vertex follows from the definition of edge braids.
\end{proof}
\begin{rem}
Note that $w_{\lambda}$ can be viewed as a non-identity element of $\pi_1(\Sigma_g-v_{k+1},v_1)$. It is constructed so that set $\{w_{\lambda}\}\cup S$ generates $\pi_1(\Sigma_g-v_{k+1},v_1)$, where $S$ is the set of generators of $\pi_1(\Sigma_g,v_1)$ (viewed on the punctured surface $\Sigma_g-v_{k+1}$). The quasi-edge braids built in Lemmata \ref{super} and \ref{cool} are used in Proposition \ref{one}.
\end{rem}
\begin{lem}
Let $\alpha=(v_0, v_1, v_2)$ and $\alpha'=(v_0, v_2, v_3)$ be two 2-simplices in the triangulation such that $\alpha\cap\alpha'=(v_0, v_2)$. Let $e_{i}=(v_i,v_{i+1})$ for $0\leq i\leq 2$ and $e_{3}=(v_3,v_{0})$. Then there exists a quasi-edge braid that is homotopic to $e_0\ast e_1\ast e_2\ast e_3$.
\end{lem}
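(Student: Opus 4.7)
The plan is to reduce the statement to Lemma \ref{super}. First I would verify that $\Lambda = e_0 \ast e_1 \ast e_2 \ast e_3$ is a simple edge loop in the triangulation of $\Sigma_g$ based at $v_0$: since $\alpha \cap \alpha' = (v_0, v_2)$, the four edges trace out the boundary of the quadrilateral $\alpha \cup \alpha'$, which is a simple closed cycle. Interpreting the conclusion as asserting that some quasi-edge braid has vertex loop at $v_0$ homotopic to $\Lambda$, I can then invoke Lemma \ref{super} applied to $\Lambda$ with $v_i = v_0$ and $v_j$ taken to be any other vertex on $\Lambda$ (e.g.\ $v_1$). This immediately delivers a quasi-edge braid of the required form.

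Explicitly, the construction specializes to taking
$$\Omega = b_{e_0} b_{e_1} b_{e_2} b_{e_3},$$
whose strand starting at $v_0$ travels successively along $e_0, e_1, e_2, e_3$, returning to $v_0$ along exactly $\Lambda$. Since $\Omega$ induces a 3-cycle on the strands originally at $v_1, v_2, v_3$ and is therefore not yet a pure braid, I would post-compose with $b_{e_3} b_{e_2} b_{e_1}$ to undo that permutation, exactly as in the construction of $\widehat{\Omega}$ inside the proof of Lemma \ref{super}. This makes the resulting quasi-edge braid pure while leaving the trajectory of the $v_0$-strand in the correct homotopy class, since the corrective factor only shuffles strands at positions other than $v_0$.

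The main obstacle I anticipate is interpretive rather than technical. A quasi-edge braid lives in $B_n(\Sigma_g)$, whereas $e_0 \ast e_1 \ast e_2 \ast e_3$ is a loop in $\pi_1(\Sigma_g, v_0)$, so one must first pin down the meaning of ``homotopic'' in the statement. Once this is settled in the natural way (the induced vertex loop of the constructed quasi-edge braid at $v_0$ is homotopic, as a based loop in $\Sigma_g$, to $\Lambda$), the proof is essentially a direct appeal to the bookkeeping already carried out in Lemma \ref{super}, with no new edge-braid identities required.
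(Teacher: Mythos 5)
Your high-level route is genuinely different from the paper's: the paper simply exhibits the six-letter word $b_{e_0}b_{e_1}b_{e_0}b_{e_2}b_{e_0}b_{e_2}$ and calls it a direct computation, whereas you observe that $\Lambda=e_0\ast e_1\ast e_2\ast e_3$ is a simple edge loop (correct, since $\alpha\cap\alpha'=(v_0,v_2)$ forces $v_0,v_1,v_2,v_3$ to be distinct) and reduce to Lemma \ref{super}. As a black-box citation that is a legitimate derivation from the paper's own toolkit, and your reading of ``homotopic'' is the only sensible one.

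However, your explicit instantiation has a concrete error. The corrective factor $b_{e_3}b_{e_2}b_{e_1}$ does \emph{not} ``only shuffle strands at positions other than $v_0$'': the edge $e_3=(v_3,v_0)$ contains $v_0$, so $b_{e_3}$ moves whatever strand is sitting at $v_0$. Tracking permutations, $\Omega=b_{e_0}b_{e_1}b_{e_2}b_{e_3}$ induces the $3$-cycle $v_1\mapsto v_3\mapsto v_2\mapsto v_1$ (fixing $v_0$, as you say), while $b_{e_3}b_{e_2}b_{e_1}$ induces the $4$-cycle $v_0\mapsto v_1\mapsto v_2\mapsto v_3\mapsto v_0$; the composite $\widehat{\Omega}=\Omega\, b_{e_3}b_{e_2}b_{e_1}$ therefore induces the transposition of $v_0$ and $v_1$. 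In particular the $v_0$-strand of your $\widehat{\Omega}$ terminates at $v_1$, so it is not a loop and cannot be homotopic to $\Lambda$. (This defect is inherited from the paper's own proof of Lemma \ref{super}, whose $\widehat{\Omega}$ is a word of odd length $2k-1$ in transpositions and hence never pure; and the paper's one-line proof of the present lemma suffers the same problem, its word inducing a $3$-cycle.) The repair is straightforward: correct the permutation using only the edges disjoint from $v_0$, i.e., take $b_{e_0}b_{e_1}b_{e_2}b_{e_3}\cdot b_{e_2}b_{e_1}$. One checks directly that this word induces the identity permutation and that its strand at $v_0$ traverses exactly $e_0\ast e_1\ast e_2\ast e_3$ before the final two factors, which do not touch $v_0$; this proves the lemma.
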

\begin{proof}
The braid $b_{e_0}b_{e_1}b_{e_0}b_{e_2}b_{e_0}b_{e_2}$ gives the desired braid.
\end{proof}

\begin{lem}{(Local Edge Braid Relations).}\label{local}
Fix a 2-simplex $\alpha=(v_0, v_1, v_2)$ in the triangulation of $\Sigma_g$ with boundary $\partial(\alpha)=\{e_0, e_1, e_2\}$, where $e_0=(v_0,v_{1})$, $e_1=(v_1,v_{2})$, and $e_2=(v_2,v_{0})$. Then the following relations hold:
\begin{equation}
b_{e_1}b_{e_0}=b_{e_2}b_{e_1}=b_{e_0}b_{e_2}
\end{equation}
\begin{equation}
b_{e_0}b_{e_1}b_{e_0}=b_{e_1}b_{e_0}b_{e_1}
\end{equation}
\begin{equation}
b_{e_1}b_{e_2}b_{e_1}=b_{e_2}b_{e_1}b_{e_2}
\end{equation}
\begin{equation}
b_{e_0}b_{e_2}b_{e_0}=b_{e_2}b_{e_0}b_{e_2}
\end{equation}

\end{lem}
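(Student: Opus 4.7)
The strategy is to prove the cyclic relation first via a local geometric isotopy, then derive the three braid-type relations algebraically. Each edge braid $b_{e_i}$ is supported in the diamond of two 2-simplices containing $e_i$, so all four relations are local statements near $\alpha$; the $n-3$ strands based at vertices of $K_0 \setminus \{v_0, v_1, v_2\}$ stay constant throughout, and it suffices to analyze the motions of $v_0,v_1,v_2$ in the star of $\alpha$.

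Let $c$ denote the barycenter of $\alpha$ and let $\rho \in B_n(\Sigma_g)$ be the braid induced by continuously rotating $v_0, v_1, v_2$ by $2\pi/3$ about $c$ (in the direction matching the cyclic permutation produced by $b_{e_1}b_{e_0}$), supported in a small neighborhood of $\alpha$ and leaving every other vertex of $K_0$ fixed. I would prove $b_{e_0}b_{e_2} = \rho$ by exhibiting an explicit isotopy in $C_n(\Sigma_g)$ between the two-step edge-braid motion (first the half-twist swapping $\{v_0, v_1\}$ about the midpoint of $e_0$, then the half-twist swapping $\{v_0, v_2\}$ about the midpoint of $e_2$) and the single $2\pi/3$ rotation $\rho$. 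Intuitively each edge half-twist accomplishes roughly a third of the required rotation, and the two half-twists can be continuously straightened into the simultaneous rotation. The equalities $b_{e_1}b_{e_0} = \rho$ and $b_{e_2}b_{e_1} = \rho$ follow by the same argument after cyclically relabeling the indices, yielding the cyclic relation.

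With the cyclic relation in hand, the three braid-type relations reduce to algebra. From $b_{e_1}b_{e_0} = b_{e_0}b_{e_2}$ one solves $b_{e_2} = b_{e_0}^{-1}b_{e_1}b_{e_0}$; substituting into $b_{e_2}b_{e_1} = b_{e_1}b_{e_0}$ yields $b_{e_0}b_{e_1}b_{e_0} = b_{e_1}b_{e_0}b_{e_1}$. The remaining two braid relations are obtained identically by substituting $b_{e_0} = b_{e_1}^{-1}b_{e_2}b_{e_1}$ into $b_{e_2}b_{e_1} = b_{e_0}b_{e_2}$, and $b_{e_1} = b_{e_2}^{-1}b_{e_0}b_{e_2}$ into $b_{e_1}b_{e_0} = b_{e_2}b_{e_1}$, respectively.

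The main obstacle is making the isotopy in the cyclic relation rigorous. Geometrically it is transparent from a picture --- each edge half-twist sweeps out roughly a third of the full rotation --- but producing an explicit parameterized homotopy and verifying that the three moving points remain pairwise distinct at every intermediate time requires care. The cleanest way to carry this out is to take a small disk neighborhood $D \subset \Sigma_g$ of $\alpha$ containing $v_0, v_1, v_2$ and no other vertices of $K_0$, then reduce the claim to an elementary verification in the 3-strand braid group of $D$, in which $b_{e_0}, b_{e_1}, b_{e_2}$ are the three standard half-twist generators associated to the pairs of distinguished points.
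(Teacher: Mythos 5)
Your proposal is correct and follows essentially the same route as the paper: establish the cyclic relation $b_{e_1}b_{e_0}=b_{e_2}b_{e_1}=b_{e_0}b_{e_2}$ geometrically, then obtain the three braid-type relations from it by elementary group manipulation (the paper does one and invokes symmetry for the others). Your identification of the common value with the $2\pi/3$ rotation and the reduction to the $3$-strand braid group of a disk is a more explicit justification of what the paper dismisses as ``a direct computation,'' but it is the same argument.
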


\begin{proof}
Relation (8) follows from a direct computation. By multiplying both sides of $b_{e_1}b_{e_0}=b_{e_2}b_{e_1}$ by $b_{e_0}$ on the left and using the relation $b_{e_0}b_{e_2}=b_{e_1}b_{e_0}$, we deduce relation (9):
$$
b_{e_0}b_{e_1}b_{e_0}=b_{e_0}b_{e_2}b_{e_1}
$$
$$
=b_{e_1}b_{e_0}b_{e_1}.
$$
Relations (10) and (11) follow by symmetry.
\end{proof}

\begin{rem}
The relations between edge braids described in Lemma \ref{local} are very similar to those between the generators of Artin's classical braid groups $B_n$, which themselves induce relations between the generating transpositions in the Coxeter presentation of $S_n$. 

\end{rem}

\subsection{Proof of Theorem \ref{theorem} for $g\geq 1$}
\subsubsection{Outline of Approach}\label{out}
Fix a arbitrary element $[\sigma]\in P_n(\Sigma_g)\cap\mathrm{ker}(\omega)$. We develop a procedure to successively multiply $[\sigma]$ by quasi-edge braids until the resulting braid $[\sigma']$ has at most one non-trivial vertex loop at some vertex $x_i$ in the triangulation. This makes use of the results of sections \ref{braids} and \ref{absd}. The braid $[\sigma']$ can then be regarded as an element of $\pi_1(\Sigma_g-C_{[\sigma']},x_i)$. We then use topological and group theoretical methods involving the fundamental groups of (punctured) surfaces to deduce that $[\sigma']$ itself is a quasi-edge braid, which implies the result.

\subsubsection{Generators of $\pi_1(\Sigma_g)$ and Edge Loops}\label{absd}

Proposition \ref{hom} and Lemma \ref{unzip} allow us to conveniently describe generators of $\pi_1(\Sigma_g)$ using edge loops. Recall that $\pi_1(\Sigma_g)$ has generators $ [f_1], [f_2], \ldots, [f_{2g}]$.
If we think of $\Sigma_g$ as the connected sum $\Sigma_g=\Sigma_1\#\ldots\#\Sigma_1$ of $g$ tori, then $[f_{i}]$ and $[f_{i+1}]$ for $i \equiv 1 \Mod{2}$ can be realized as generators of the fundamental group of the $i^{\mathrm{th}}$ torus in the connected sum. We refer to the $[f_{i}]$'s as the \textit{standard generators} of $\pi_1(\Sigma_g)$.

\begin{prop}\label{hom}
Let $v\in\Sigma_g$ be a vertex in the triangulation of $\Sigma_g$. Fix the usual generators $[f_1], \ldots, [f_{2g}]$ of $\pi_1(\Sigma_g,v)$. Then there exists representatives of each class $[f_i]$ for $1\leq i\leq 2g$ that are simple edge loops in the triangulation of $\Sigma_g$, i.e each $f_i$ is homotopic to a simple edge loop.
\end{prop}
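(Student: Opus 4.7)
The plan is to represent each $[f_i]$ first by a simple closed curve on $\Sigma_g$, then push this curve into the $1$-skeleton of $K$ to get an edge loop representative, and finally simplify to obtain a genuinely simple edge loop. The third step will be the main obstacle.

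First, I would invoke the classical construction of $\Sigma_g$ as a $4g$-gon with identified sides: the sides descend to simple closed curves $\gamma_1, \ldots, \gamma_{2g}$ on $\Sigma_g$, all passing through a single point $w$ (the image of the polygon's vertices), whose based homotopy classes are the standard generators of $\pi_1(\Sigma_g, w)$. After a preliminary conjugation by a path from $w$ to $v$ (which changes the basepoint but preserves the homeomorphism type of each curve), one may assume these simple closed curves are based at $v$ and represent $[f_1], \ldots, [f_{2g}]$.

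Second, for each $i$ I would homotope $\gamma_i$ rel $v$ into $K^{(1)}$. Perturb $\gamma_i$ so that it meets $K^{(1)}$ transversely and avoids every vertex of $K$ other than $v$; then $\gamma_i$ crosses the $1$-skeleton in finitely many points, all in edge interiors, and every arc between successive crossings lies inside a single closed $2$-simplex. Sliding each crossing along its edge to a nearest vertex, via a small homotopy supported in the two $2$-simplices sharing that edge, produces a curve lying in $K^{(1)}$: each of the original arcs now joins two vertices along the boundary of a common $2$-simplex $\sigma$ and can therefore be replaced with an edge path along $\partial\sigma$. Concatenating yields an edge loop $\lambda_i$ based at $v$ that is homotopic to $\gamma_i$, so $[\lambda_i]=[f_i]$.

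Finally, I would simplify $\lambda_i$ to make it simple. Because $\lambda_i$ is homotopic to the simple closed curve $\gamma_i$, the geometric self-intersection number of the class $[f_i]$ is zero, so every self-intersection of $\lambda_i$ is unforced by homotopy. By the standard innermost-bigon argument for curves on surfaces, any such self-intersection bounds either a disk or a bigon whose complementary arc is a subarc of $\lambda_i$ itself, and cutting across this disk produces a homotopic curve with strictly fewer self-intersections. The main obstacle is implementing this surgery combinatorially at the level of edge loops: one must replace the offending subarc of $\lambda_i$ with an edge path routed around the disk region, while preserving both the edge-loop structure and the homotopy class. This can be arranged using the isomorphism $E(K,v)\simeq \pi_1(\Sigma_g,v)$: the boundary of the disk region represents a trivial word in $E(K,v)$, so the replacement amounts to substituting an equivalent word, and the number of self-intersections is a strict monovariant under such substitutions. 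Iterating until no self-intersections remain yields the required simple edge loop representative of $[f_i]$.
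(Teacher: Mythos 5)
Your first two steps are sound, and up to that point your argument runs parallel to the paper's: both start from a simple closed curve representing $[f_i]$, homotoped so that it meets the vertex set of $K$ only at $v$ and crosses the $1$-skeleton transversely in edge interiors. The divergence is in what happens next, and your third step is where the proof breaks. The paper never produces a non-simple edge loop that then has to be repaired: it observes that each $2$-simplex traversed by the (simple) curve is entered and exited through two of its three edges, takes the \emph{third} edge of each such $2$-simplex --- the one disjoint from the curve --- and chains these together into an edge path running parallel to the curve, closed up through $v$ by two edges of the star of $v$. Simplicity of the resulting edge loop is inherited directly from simplicity of the curve, so no simplification step is ever needed.

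Your proposal instead first collapses the curve onto the $1$-skeleton by sliding crossings to nearby vertices, which can and generally will create self-intersections (two crossings on the same edge may slide to the same vertex, and an edge loop in a complex with few vertices is easily forced to revisit vertices), and then appeals to an innermost-bigon reduction to remove them. That is the gap you yourself flag as ``the main obstacle,'' and it is not resolved by the appeal to $E(K,v)\simeq\pi_1(\Sigma_g,v)$. The bigon criterion is a statement about curves in general position up to homotopy or isotopy; it does not hand you a combinatorial move on edge loops in a \emph{fixed} triangulation. The region cut off by a bigon need not be a subcomplex, the replacement edge path ``routed around'' it need not exist inside the $1$-skeleton without passing through vertices the loop already visits, and the assertion that the self-intersection count is a strict monovariant under word substitutions in $E(K,v)$ is precisely the claim that needs proof --- elementary edge-path equivalences (inserting or deleting backtracks, swapping an edge for the other two sides of a $2$-simplex) can easily increase the number of vertex revisits. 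To repair the argument you would either have to prove that monovariant claim, or, more simply, adopt the paper's strategy of building the edge loop so that it is simple by construction rather than simplifying after the fact.
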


\begin{proof}
Clearly we can assume that $f_i$ is a simple loop. Locally deform $f_i$ to a homotopic loop $f_i'$ based at $v$ such that the only vertex in the triangulation of $\Sigma_g$ that $\mathrm{im}(f_i')$ intersects is $v$ and $f_i'$ remains simple. Let $S$ denote the set of simplices $\beta$ in the triangulation such that $\beta\cap\mathrm{im}(f_i')$ is nonempty. Let $S (v)$ denote the star of $v$, i.e the set of simplices in the triangulation that contain $v$ as $0$-face. It is clear by inspection that $\mathrm{im}(f_i')$ intersects exactly two elements of $S(v)$ (or that $f_i'$ can be homotoped into such a loop whose image satisfies this), which we denote by $\gamma_1$ and $\gamma_2$. Further homotope $f_i'$ into a simple loop $f_i''$ such that $f_i''$ intersects precisely two 1-faces of each element of $\beta-\{\gamma_1,\gamma_2\}$. Let $S'$ denote the set of all 1-faces of elements in $\beta-\{\gamma_1,\gamma_2\}$ that are disjoint from $\mathrm{im}(f_i')$. We construct an algorithm which helps us build an edge loop which is homotopic to $f_i''$.
\begin{itemize}
    \item Step $0$: Choose a vertex $w_1$ in $S(v)$ that is also an endpoint of an element of $S'$
    \item Step $1$: Let $e_1$ denote the unique element of $S'$ of which $w_1$ is an endpoint of. Set $P_1=\{e_1\}$.
    \item Step $2$: Let $e_2$ denote the unique element of $S'-\{e_1\}$ that contains as an endpoint the endpoint of $e_1$ that is not $w_1$. Let $w_2$ denote this endpoint and set $P_2=\{e_1,e_2\}$
    \item Step $3$: Continue in the same fashion as Steps 1 and 2 by letting $e_j$ denote the unique element of $S'-\{e_1, \ldots, e_{j-1}\}$ that contains as an endpoint the endpoint of $e_{j-1}$ that is not $w_{j-1}$, letting $w_{j}$ denote this endpoint and setting $P_k=P_{k-1}\cup \{e_j\}$. Repeat until $w_{j+1}$ (the endpoint of $e_j$ that is not $w_j$) is a vertex in $S(v)$ for $j\geq 2$.
    \end{itemize}
Since $w_i$ is always a $0$-face of a 2-simplex in the triangulation which intersects $\mathrm{im}(f_{i}'')$, the process in Step 3 will terminate, say after $k$ total iterations. Since $w_1, w_{k+1}\in S(v)$, there are edges $r=(w_{k+1},v)$ and $r'=(v,w_1)$. Then $l=e_1\ast e_2\ast\ldots\ast e_k\ast r\ast r'$ (the concatenation of the elements in $P_{k}$ with $r\ast r'$) is a simple edge loop which is homotopic to $f_{i}''$, and hence to $f_{i}$. That $l$ is simple follows from the fact that $f_i''$ is simple, so the proposition follows.
\end{proof}
The next lemma lets us ``extend" edge loops to homotopic edge loops that intersect certain vertices. This is a key ingredient in the proof of Proposition \ref{one}.
\begin{lem}\label{unzip}
Fix vertices $v$, $v'$ in the triangulation and let $L$ be a simple edge loop. Then $L$ is homotopic to a simple edge loop that intersects $v$ and $v'$.
\end{lem}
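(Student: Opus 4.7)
The plan is to carry out the deformation in two stages. First, perform an ambient isotopy of $\Sigma_g$ that moves $L$ (regarded as a simple closed curve) so that it passes through both $v$ and $v'$; then, combinatorially approximate the resulting curve by a simple edge loop using the star-walkthrough algorithm from the proof of Proposition \ref{hom}.

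For the first stage, view $L$ as a simple closed curve in $\Sigma_g$ and suppose $v \notin L$. Choose a simple arc $\alpha \subset \Sigma_g$ from $v$ to a point $u \in L$ that meets $L$ transversely only at $u$. A thin tubular neighborhood of $\alpha$ is a disk, and a standard finger-move isotopy supported in this disk pushes a small sub-arc of $L$ along $\alpha$ so that $L$ becomes a simple closed curve passing through $v$. Repeating this construction with a second arc from $v'$, disjoint from the first finger move, produces a simple closed curve $\tilde{L}$ isotopic (hence homotopic) to $L$ and passing through both $v$ and $v'$.

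For the second stage, first deform $\tilde{L}$ by local homotopies inside the stars of vertices in $K_0 \setminus \{v, v'\}$ so that $\tilde{L} \cap K_0 = \{v, v'\}$ and $\tilde{L}$ remains simple. Cut $\tilde{L}$ at $v$ and $v'$ into two simple arcs $\tilde{L}_1$ and $\tilde{L}_2$ joining $v$ and $v'$. Applying the star-walkthrough algorithm of Proposition \ref{hom} to each arc (with the obvious modification that both endpoints $v$ and $v'$ play the role of the distinguished ``basepoint'' in that proof) produces a simple edge path from $v$ to $v'$ approximating each $\tilde{L}_i$. Concatenating these two edge paths yields an edge loop $L'$ homotopic to $\tilde{L}$ (hence to $L$) that passes through $v$ and $v'$.

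The main obstacle will be ensuring that the concatenated edge loop $L'$ is in fact simple. Simplicity of each individual edge-path approximation follows from the simplicity of the original arc $\tilde{L}_i$ together with the local nature of the star-walkthrough procedure. Simplicity of the concatenation requires in addition that no interior vertex of the approximation of $\tilde{L}_1$ coincides with an interior vertex of the approximation of $\tilde{L}_2$. This is arranged by first choosing the local deformation in the second stage so that the two branches of $\tilde{L}$ at $v$ leave $v$ through distinct $2$-simplices in the star of $v$ (and similarly at $v'$), and then noting that the star-walkthrough only visits $1$-faces of the $2$-simplices crossed by $\tilde{L}_i$, which are disjoint for $i = 1, 2$ by this local choice and the simplicity of $\tilde{L}$.
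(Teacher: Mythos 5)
Your overall strategy (an ambient finger-move isotopy to drag $L$ through $v$ and $v'$, followed by a simplicial approximation of the resulting curve via the star-walkthrough of Proposition \ref{hom}) is genuinely different from the paper's argument, which never leaves the $1$-skeleton: the paper inducts on the length of a shortest edge path $\lambda$ from $v$ to a vertex of $L$ and performs a combinatorial case analysis on the edges around the endpoint $x$ of $\lambda$, in each case either sliding $L$ across a contractible union of $2$-simplices to bring it one edge closer to $v$, or exhibiting a shorter $\lambda$. Your route is conceptually cleaner, but it has a genuine gap exactly at the point you flag: simplicity of the output.

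The specific step that fails is the assertion that the $2$-simplices crossed by $\tilde{L}_1$ are disjoint from those crossed by $\tilde{L}_2$ ``by this local choice and the simplicity of $\tilde{L}$.'' Simplicity of $\tilde{L}$ gives no such thing: two disjoint arcs on the surface can perfectly well cross the same $2$-simplex, and the local choice of departure triangles at $v$ and $v'$ controls only the stars of those two vertices, not triangles elsewhere. Worse, the finger move itself manufactures exactly this situation: after pushing a sub-arc of $L$ out to $v$ along the guiding arc $\alpha$, the two sides of the finger run nearly parallel through the \emph{same} sequence of $2$-simplices (those meeting $\alpha$), so the two edge-path approximations are forced to compete for edges and vertices in the star of $\alpha$; a single vertex of small degree can serve as the ``free'' apex for a triangle on the left of one strand and for a triangle on the right of the other, producing a self-intersection of $L'$. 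For the same reason the claim that simplicity of a single arc $\tilde{L}_i$ implies simplicity of its edge-path approximation is not justified: a simple arc may enter the closed star of a given vertex several times, and the walkthrough can then revisit that vertex. To repair the argument you would need a genuine normal-position/innermost-disk analysis showing that the approximating edge paths can be chosen embedded and mutually disjoint away from $v,v'$ --- which is essentially the combinatorial work the paper's case-by-case induction is doing.
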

\begin{proof}
We show that $L$ can be homotoped so that it intersects $v$ (a similar argument proves the full lemma). Choose a simple edge path $\lambda$ with endpoints $v$ and some vertex $x$ contained in $L$. Let $k$ be the length of $\lambda$. 
By induction, it suffices to show that there exists an edge loop $\xi$ homotopic to $L$ such that there is a simple edge path from $v$ to some vertex in $\xi$ of length less than $k$. Let $e_1, \ldots, e_r$ denote ordered list of edges that defines $L$. Let $\lambda'$ denote the edge path $e_1, \ldots e_{\floor{\frac{r}{2}}}$ and let $\lambda''$ denote the edge path $e_{\ceil{\frac{r}{2}}}, \ldots, e_r$. Let $E(x)$ denote the set of edges in the triangulation that contain $x$ as an endpoint. Denote the unique edge in $E(x)\cap\lambda'$ by $\mu_0$. Label the elements of $E(x)$ that lie in between $\lambda'$ and $\lambda$ in sequential order starting with $\mu_1=e_1\in\lambda'$ and ending at $\mu_p\in\lambda$, so that there are no edges in $E(x)$ that are between $\mu_i$ and $\mu_{i+1}$. 
\begin{claim}
The edges $\mu_i$ and $\mu_{i+1}$ must be 1-faces of a common 2-simplex $\sigma_{i,i+1}$.
\end{claim}
\begin{proof}
Suppose not. Since there are no edges in $E(x)$ between $\mu_i$ and $\mu_{i+1}$, this would imply that $\Sigma_g$ is homotopy equivalent to a wedge sum of $2g$ circles, which is a contradiction.
\end{proof}

For such $\mu_i$ and $\mu_{i+1}$, let $\mu_{i,i+1}$ denote the 1-face of $\sigma_{i,i+1}$ that is not $\mu_i$ or $\mu_{i+1}$. Let $w_i$ denote the endpoint of $\mu_i$ that is not $x$. We proceed by casework on the configuration of edges in $E(x)$.
\begin{itemize}
    \item Case 1: Suppose that there does not exist an edge $\mu_{j}$ (with $2\leq j\leq p-1$) such that $w_j\in\lambda\cup\lambda'\cup\lambda''$ (See Figure 3). Let $g_1$ denote the unique edge in $E(x)\cap\lambda$. Set $\xi=g_1\ast\mu_{p-1, p}\ast\mu_{p-2, p-1}\ast\ldots\ast\mu_{1,2}\ast e_1\ast\ldots \ast e_{k}$; this is homotopic to $L$ since the subcomplex $\sigma_{1,2}\cup\ldots\cup\sigma_{p-1,p}$ is contractible. There is clearly a sub-path of $\lambda$ from $v$ to $w_p\in\xi$ of length $k-1$, so this completes the induction.
    \end{itemize}
    In the following cases, we assume that there exists an edge $\mu_j$ (with $2\leq j\leq p-1$) such that $w_j\in\lambda\cup\lambda'\cup\lambda''$. Let $i$ be the smallest index such that $\mu_i$ satisfies this condition.
    \begin{itemize}
    \item Case 2: $w_i\in\lambda$ (See Figure 4). The argument in the proof of Case 1 holds.
    \vspace{2mm}
  
    \vspace{2mm}
    \item Case 3: $w_i\in\lambda''$ (See Figure 5). Note that for all $j$ such that $i+1\leq j\leq p-1$, we must have that $w_{j}\in\lambda\cup\lambda''$. There are two possible subcases.
    \begin{itemize}
    \item Subcase 3.1: There exists a $j$ such that $i+1\leq j\leq p-1$ and $w_j\in \lambda$. Then there is some edge path $\Delta$ from $w_i$ to $w_j$. Let $\Delta'$ denote the unique sub-path of $\lambda$ with endpoints $w_j$ and $x$. Let $e_k$ denote the unique edge in $L$ such the head of $e_k$ is $w_i$. Then the edge loop $\Delta'\ast e_1\ast e_2\ast\ldots\ast e_k\ast\Delta$ is homotopic to $L$ and there is evidently a proper sub-path of $\lambda$ from $x$ to $v$.
        \vspace{2mm}
        \item Subcase 3.2: No such $j$ exists. 
        \begin{itemize}
            \item 3.2a: There is some $n$ such that $i+1\le n\leq p-1$ and $w_n\in \lambda''$. Let $k$ be the maximum index such that $i+1\leq k\leq p-1$ and $w_k\in\lambda''$. Then there is an edge path $\Upsilon$ from $w_k$ to $w_p$. Let $e_t$ be the unique edge in $f_i$ such that the head of $e_t$ is $w_k$. Then the edge loop $\Upsilon\ast\mu_p\ast e_1\ast e_2\ast \ldots\ast e_t$ satisfies the desired conditions.

            \item 3.2b: No such $n$ exists. Let $\zeta$ denote the unique edge in $E(x)\cap\lambda''$. Since none of the edges in $E(x)$ that lie between $\mu_p$ and $\zeta$ can have an endpoint in $\lambda'$, the proof reduces to the same arguments given in Case 2 and Case 4 (see below), except they are applied to the edges in $E(x)$ that lie between $\mu_p$ and $\zeta$ instead of the edges in $E(x)$ that lie between $\mu_1$ and $\mu_p$. 
        \end{itemize}

        \end{itemize}
    \vspace{2mm}
     \item Case 4: $w_i\in\lambda'$ (See Figure 6).
    \begin{itemize}
    \item Subcase 4.1: For all $i$ with $i+1\leq j\leq p-1$, $w_i\notin \lambda\cup\lambda'\cup\lambda''$. This is handled the same way as Case 1.
    \item Subcase 4.2: There is some $i$ such that $i+1\leq j\leq p-1$ and $w_i\in \lambda\cup\lambda'\cup\lambda''$. In the remaining subcases, $q$ denotes the largest index such that $i+1\leq q\leq p-1$ and $w_q\in\lambda\cup\lambda'\cup\lambda''$.
    \begin{itemize}
        \item Subcase 4.2a: $w_q\in\lambda$. Clearly, $w_q\neq w_p$ since the equality $w_q= w_p$ would contradict the definition of a simplicial complex. Let $\chi$ be the edge path obtained by concatenating  $\mu_q$ with the unique sub-path of $\lambda$ with endpoints $w_q$ and $v$. Then $\chi$ is a simple edge path from $x$ to $v$ that is shorter than $\lambda$, as desired.
        \item Subcase 4.2b: $w_q\in\lambda'$. Then for any $m$ with $i+1\leq m\leq q-1$, it must be that  $w_{m}\in\lambda'$ or $w_{m}\notin\lambda\cup\lambda'\cup\lambda''$. By the same logic as the proof of Subcase 3.1, there is an edge path $\gamma$ from $w_p$ to $w_q$. Let $e_s$ denote the unique edge in $L$ with tail $w_q$. Then by similar logic to the proof of Case 1, we see that the edge loop $\mu_p\ast\gamma\ast e_s\ast e_{s+1}\ast\ldots\ast e_r$ works.
        \item Subcase 4.2c: $w_q\in\lambda''$. This is handled using arguments similar to those in Subcase 3.1
        
         \end{itemize}
        \end{itemize}
        

\end{itemize}

All cases are covered, so the proof is complete.
\end{proof}
\begin{center}
\begin{figure}
\includegraphics[width=0.7\textwidth]{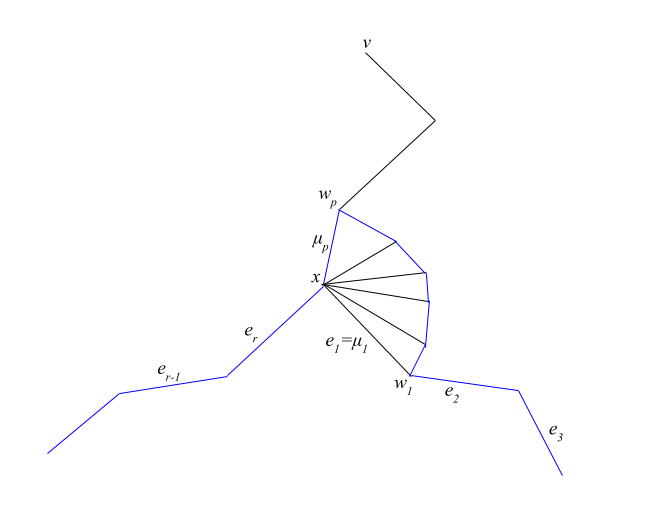}
\caption{Illustration of Case 1 of Lemma \ref{unzip}. The blue edge path is a portion of the homotoped edge loop $\xi$.}
\end{figure}

\begin{figure}
\includegraphics[width=0.7\textwidth]{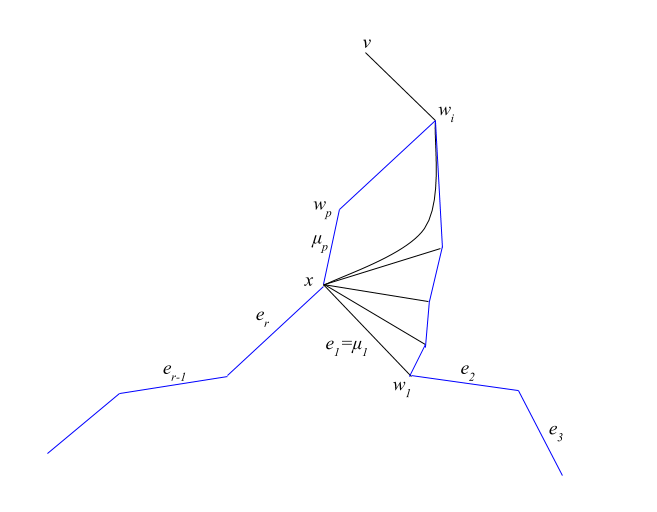}
\caption{Illustration of Case 2 of Lemma \ref{unzip}}
\end{figure}

\begin{figure}
\includegraphics[width=0.7\textwidth]{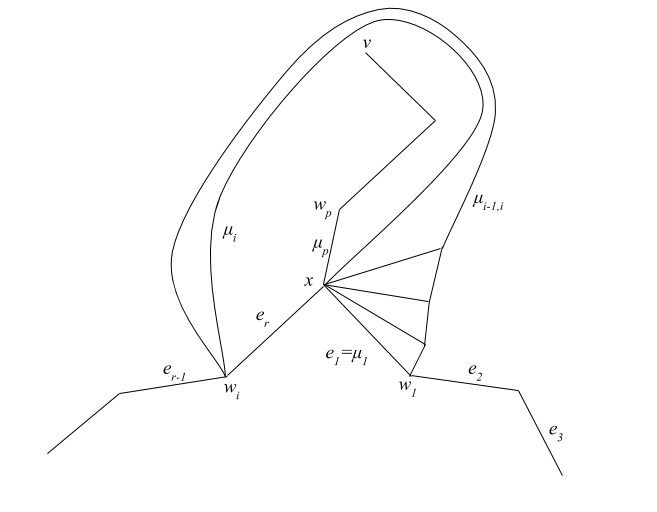}
\caption{Illustration of Case 3 of Lemma \ref{unzip}}
\end{figure}
\begin{figure}
\includegraphics[width=0.7\textwidth]{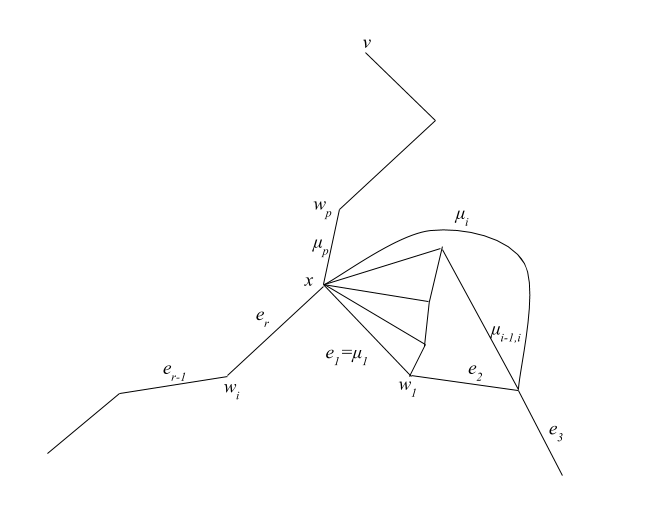}
\caption{Illustration of Case 4 of Lemma \ref{unzip}}
\end{figure}
\end{center}

\subsubsection{Words}
We prove a combinatorial lemma which is used in the next section. Throughout this subsection, $G$ denotes a finitely generated group with generators $a_1, \ldots, a_n$. By a \textit{word} in $F_n$, we mean a potentially unreduced word (a string consisting of generators in which terms such as $xx^{-1}$ need not be simplified to $1$). Let $S\subset\{a_1, \ldots a_n\}$ be a subset. Fix a word $w=a_{i_1}\ldots a_{i_k}$ in $G$. Let $w_{S}^\mathrm{min}$ (resp. $w_{S}^\mathrm{max}$) denote the minimum (resp. maximum) index $m$ such that $a_{i_m}\in S$. A word $w$ is \textit{$S$-connected} if $a_{i_j}\in S$ for all $w_{S}^\mathrm{min}\leq j\leq w_{S}^\mathrm{max}$. 
\begin{prop}\label{represent}
Let $w$ be a word in $G$. Then for any non-empty ordered subset $S\subset\{a_1, \ldots a_n\}$, there exists an $S$-connected word $w_{\star}$ such that the following conditions hold.
\begin{itemize}
    \item All elements in $w_{\star}$ that are also in $S$ appear in the same order that they appear in $w$.
    \item $w=w_{\star}$.
    \item All elements in $w_{\star}$ are either elements of $w$ or commutators of elements in $w$.
\end{itemize}

\end{prop}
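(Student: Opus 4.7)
The plan is to iteratively rewrite $w$ using the single identity
\[
t\,s \;=\; s \cdot [s^{-1},t] \cdot t, \qquad [s^{-1},t] := s^{-1}\,t\,s\,t^{-1},
\]
which holds in any group and lets an $S$-letter $s$ be transposed leftward past an arbitrary adjacent letter $t$ at the cost of inserting a single commutator between them. This identity preserves $w$ as an element of $G$, never reorders two $S$-letters (since it is only applied across non-$S$-letters), and introduces only commutators of letters already appearing in the current word.

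Concretely, I would list the $S$-letters of $w$ in the order they occur as $s_1, s_2, \ldots, s_k$ and induct on $k - j$, where $j$ is the largest index for which $s_1, \ldots, s_j$ already appear consecutively. The inductive step is to bring $s_{j+1}$ leftward through the block of intervening non-$S$-letters between $s_j$ and $s_{j+1}$ by applying the swap identity once per intervening letter. The underlying sub-lemma is: given a subword of the form $\cdots s_j \cdot \ell_1 \ell_2 \cdots \ell_m \cdot s_{j+1} \cdots$ with every $\ell_i \notin S$, iterating the identity from right to left produces
\[
\cdots s_j \cdot s_{j+1} \cdot [s_{j+1}^{-1},\ell_1]\,\ell_1\,[s_{j+1}^{-1},\ell_2]\,\ell_2 \cdots [s_{j+1}^{-1},\ell_m]\,\ell_m \cdots,
\]
so $s_j$ and $s_{j+1}$ become adjacent while the freshly inserted letters are all commutators of the form $[s_{j+1}^{-1},\ell]$.

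After at most $k-1$ rounds the resulting word $w_\star$ has its $S$-letters in one contiguous block. The three requested conditions are then immediate: $S$-connectedness holds by construction; the relative order of the $S$-letters is preserved since no two $S$-letters ever get swapped with each other; and $w = w_\star$ in $G$ because every substitution used is a group-theoretic identity valid universally.

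The main obstacle is the third bullet. Because on the second and later rounds some of the letters we have to push over are themselves commutators that were inserted in earlier rounds, the procedure naturally produces \emph{nested} commutators such as $[s_3^{-1},[s_2^{-1},t]]$. I interpret ``commutators of elements in $w$'' liberally enough to include such nested expressions, since each nested piece is itself a commutator of elements ultimately belonging to $w$; this is consistent with how the proposition is used later in the paper. If one insisted on only single commutators $[a,b]$ with $a,b$ original letters, one would need to avoid the nesting---for instance by reordering the rounds, or by rewriting the iterated commutators via the Hall--Witt identity---but this refinement does not seem needed for the intended applications.
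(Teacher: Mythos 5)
Your proof is correct and takes essentially the same approach as the paper's: both make the $S$-letters contiguous by transposing them past intervening material at the cost of inserted commutators, yours one letter at a time via $ts=s\,[s^{-1},t]\,t$ working leftward, the paper's by recursively moving the leading $S$-letter rightward past the entire intervening block with a single block commutator $[a_{i_m},\,b_{i_{m+1}}\cdots b_{i_{u-1}}]$. The nested-commutator caveat you flag as the ``main obstacle'' is not a defect of your version relative to the paper's: the paper's block commutators likewise have second arguments containing commutators produced at deeper recursion levels, so both arguments rely on the same liberal reading of the third bullet, which is all that the application in Proposition \ref{two} requires.
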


\begin{proof}
We induct on the length $l(w)$ of $w$. The base case $k=1$ is trivial. Suppose the result holds for all words of length less than $k$ (for $k>1$) and let $w=a_{i_1}\ldots a_{i_k}$ be a word of length $k$. Let $m=w_{S}^\mathrm{min}$ and set $r=a_{i_{m+1}}\ldots a_{i_{k}}$. Then by the inductive hypothesis there exists an $(S-\{a_{i_{m}}\})$-connected word $r_{\star}$ such that $r=r_{\star}$ and all elements in $r_{\star}$ that are also in $S-\{a_{i_{m}}\}$ appear in the same order that they appear in $r$. Consider the word $w'=a_{i_1}\ldots a_{i_{m}} r_{\star}$. Let $b_{i_{m+j}}$ denote the element in the $j^{th}$ position of $r_{\star}$, so that $w'=a_{i_1}\ldots a_{i_{m}}b_{i_{m+1}}\ldots b_{i_{k}}$. Let $u$ denote the minimum index such that $b_{i_{u}}\in S-\{a_{i_{m}}\}$. Then we have that the word $$w_{\star}=a_{i_1}\ldots a_{i_{m-1}}[a_{i_{m}},b_{i_{m+1}}\ldots b_{i_{u-1}}]b_{i_{m+1}}\ldots b_{i_{u-1}}a_{i_{m}}b_{i_{u}}b_{i_{u+1}}\ldots b_{i_{k}}$$ satisfies the desired conditions, so the proof is complete.
\end{proof}
\begin{exmp}
Let $w=a_1 a_3 a_2 a_5 a_4$ and $S=\{a_2, a_4\}$. Then $w_{\star}=a_1a_3[a_2,a_5]a_5a_2a_4$ is $S$-connected and equal to $w$.
\end{exmp}

\subsubsection{Unwinding Pure Balanced Braids}
Recall that for any $[\sigma]\in B_n(\Sigma_g)$, $\pi_1(\Sigma_g-C_{[\sigma]})$ is the free group on the generators
$[f_1], [f_2], \ldots, [f_{2g+\#C_{[\sigma]}-1}]$. The generators $[f_i]$ for $1\leq i\leq 2g$ can be regarded as standard generators of $\pi_1(\Sigma_g)$, while the generators $[f_i]$ for $2g+1\leq i\leq 2g+\#C_{[\sigma]}-1$ have each wind around an element of $C_{[\sigma]}$ once. 

\begin{prop}\label{one}
Let $[\sigma]$ be a pure balanced braid such that $\# C_{[\sigma]}\leq n-2$. Fix $x_j\in \{x_1, \ldots, x_n\}-C_{[\sigma]}$ and let $[f_1], \ldots, [f_{2g+\#C_{[\sigma]}-1}]$ be the usual generators of $\pi_1(\Sigma_g-C_{[\sigma]}, x_j)$. Then for each $[f_i]$, $1\leq i\leq 2g+\#C_{[\sigma]}-1$, there exists a quasi-edge braid $[\gamma]$ such that the induced vertex loop of $\gamma$ on $x_j$ is $[f_i]$, while the induced vertex loops on all other vertices whose vertex loops under $[\sigma]$ were initially constant remain constant.
\end{prop}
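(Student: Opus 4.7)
The approach is to handle the $2g$ surface generators and the $\#C_{[\sigma]}-1$ puncture generators of $\pi_1(\Sigma_g-C_{[\sigma]}, x_j)$ separately, using Lemma~\ref{super} for the former and Lemma~\ref{cool} for the latter.

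First, for a puncture generator $[f_i]$ (with $2g+1 \leq i \leq 2g+\#C_{[\sigma]}-1$) corresponding to a puncture $p \in C_{[\sigma]}$: the hypothesis $\#C_{[\sigma]}\leq n-2$ keeps the $1$-skeleton of $K$ connected after deleting $C_{[\sigma]}\setminus\{p\}$, so I can select a simple edge path $\lambda$ from $x_j$ to $p$ passing through no other vertex of $C_{[\sigma]}$. Applying Lemma~\ref{cool} to $\lambda$ produces the quasi-edge braid $w_\lambda$ whose vertex loop at $x_j$ has winding number one about $p$ and zero about every other vertex of the basepoint --- precisely the loop $[f_i]$. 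The palindromic form of $w_\lambda = b_{e_1}\cdots b_{e_{k-1}}b_{e_k}^{-2}b_{e_{k-1}}\cdots b_{e_1}$ forces the forward and backward halves to cancel on every strand other than the one starting at $x_j$, so all remaining vertex loops are trivial and the desired conditions are met.

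Next, for a surface generator $[f_i]$ with $1 \leq i \leq 2g$: by Proposition~\ref{hom} I obtain a simple edge loop $\Lambda$ representing $[f_i]$ in $\pi_1(\Sigma_g)$, and by Lemma~\ref{unzip} I homotope $\Lambda$ to a simple edge loop passing through both $x_j$ and an auxiliary vertex $v\in K_0 \setminus (C_{[\sigma]}\cup\{x_j\})$ (nonempty by the cardinality hypothesis). Applying Lemma~\ref{super} to $\Lambda$ with the distinguished pair $(x_j, v)$ yields a quasi-edge braid $w(x_j, v)$ whose only nontrivial vertex loops sit at $x_j$ and $v$, being homotopic to $\Lambda$ and $\Lambda^{-1}$ respectively. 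Since $v \notin C_{[\sigma]}$, every vertex of $C_{[\sigma]}$ retains a trivial vertex loop under $w(x_j, v)$.

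The principal obstacle is that Proposition~\ref{hom} only controls $\Lambda$ up to homotopy in $\pi_1(\Sigma_g)$, whereas the proposition requires the vertex loop at $x_j$ to equal $[f_i]$ in the free group $\pi_1(\Sigma_g - C_{[\sigma]}, x_j)$ --- classes that can differ by a word in the puncture generators. I would resolve this by right-multiplying $w(x_j, v)$ by an appropriate product of the puncture quasi-edge braids $w_{\lambda_p}^{\pm 1}$ from the first case to cancel any extraneous winding about each $p \in C_{[\sigma]}$. Since each such correction has trivial vertex loops throughout $C_{[\sigma]}$, the correction preserves the required triviality condition while adjusting the vertex loop at $x_j$ to be exactly $[f_i]$.
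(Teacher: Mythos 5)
Your proposal follows essentially the same route as the paper: puncture generators are realized by the braids $w_\lambda$ of Lemma \ref{cool}, and surface generators by combining Proposition \ref{hom} (with Lemma \ref{unzip}) and the braids $w(x_j,v)$ of Lemma \ref{super}, exploiting $\#C_{[\sigma]}\leq n-2$ to find the auxiliary vertex. Your final correction step --- multiplying by puncture braids to fix the class of the vertex loop in the free group $\pi_1(\Sigma_g-C_{[\sigma]},x_j)$ rather than merely in $\pi_1(\Sigma_g)$ --- is a sound refinement of a point the paper dispatches only with the assertion that a representative with trivial winding numbers about $C_{[\sigma]}$ exists.
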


\begin{proof}
There are two possible cases.
\begin{itemize}
\item 
Case 1: $1\leq i\leq 2g$. Then we can regard $[f_i]$ as a standard generator of $\pi_1(\Sigma_g)$; it is clear that there exists a representative $f_i$ of its homotopy class that has trivial winding number around all $y\in C_{[\sigma]}$. Since $k\leq n-2$, we can choose some $x_{k}\in \{x_1, \ldots, x_n\}-C_{[\sigma]}$ with $j\neq k$. Use Proposition \ref{hom} to find a simple edge loop $l$ which is homotopic to $e_i$ and passes through $x_j$ and $x_k$. Then take our desired quasi-edge braid to be the braid
$[\gamma]=w(x_j,x_k)$ constructed in  Lemma \ref{super}.
\vspace{2mm}
\item
Case 2: $2g+1\leq i\leq 2g+\#C_{[\sigma]}-1$. In this case, $f_i$ can be taken to be a vertex loop which has a winding number of one about some $x_m\in C_{[\sigma]}$ and zero with respect to all points in $C_{[\sigma]}-\{x_m\}$. Choose a simple edge path $\lambda$ from $x_j$ to $x_m$ in the triangulation of $\Sigma_g$. Then take $[\gamma]=w_{\lambda}$ to be the quasi-edge braid constructed in Lemma \ref{cool}.
\end{itemize}
\end{proof}
Fix $[\sigma]\in P_n(\Sigma_g)$ and write each vertex loop $[\sigma]_{x_i}$ for $x_i\in\{x_1, \ldots, x_n\}-C_{[\sigma]}$ as a product of the standard generators of $\pi_1(\Sigma_g - C_{[\sigma]})$. Using Proposition \ref{one}, we may sequentially multiply by $[\sigma]$ by quasi-edge braids whose vertex loops equal the inverses of the generators of $\pi_1(\Sigma_g - C_{[\sigma]})$ contained in the expression of $[\sigma]_{x_i}$ as a product of generators until it becomes the trivial vertex loop. This process can be repeated until all but one element of the basepoint $\{x_1, \ldots, x_n\}$ has a trivial vertex loop. From the discussion in Section \ref{out}, Theorem \ref{theorem} follows from Proposition \ref{two}.\\ 

\begin{lem}\label{split}
Let $[f_i]$ be such that $2g+1\leq i\leq 2g+\#C_{[\sigma]}-1\subset E_n(\Sigma_g)$. Then $[f_i]\in E_n(\Sigma_g)$
\end{lem}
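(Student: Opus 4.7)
The plan is to exhibit a quasi-edge braid realizing $[f_i]$, via Lemma \ref{cool}, and then verify that this braid agrees in $B_n(\Sigma_g)$ with the corresponding pure braid whose only moving strand is at $x_j$. Since $2g+1\le i\le 2g+\#C_{[\sigma]}-1$, the generator $[f_i]$ of the free group $\pi_1(\Sigma_g-C_{[\sigma]},x_j)$ may be taken to be a simple loop winding once about a unique vertex $x_m\in C_{[\sigma]}$ and zero times about every other puncture in $C_{[\sigma]}$.

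First I would fix a simple edge path $\lambda=e_1\ast\cdots\ast e_k$ in the triangulation from $x_j$ to $x_m$ and apply Lemma \ref{cool} to obtain the quasi-edge braid
\[
w_\lambda=b_{e_1}b_{e_2}\cdots b_{e_{k-1}}\,b_{e_k}^{-2}\,b_{e_{k-1}}\cdots b_{e_2}b_{e_1}\in E_n(\Sigma_g),
\]
whose vertex loop at $x_j$ has winding number $+1$ about $x_m$ and $0$ about every other basepoint vertex, so that this vertex loop represents the class $[f_i]$ in $\pi_1(\Sigma_g-C_{[\sigma]},x_j)$. What remains is to check that $w_\lambda$ actually coincides in $B_n(\Sigma_g)$ with the embedded pure braid $[f_i]$ whose only non-constant strand is at $x_j$. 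Tracing $w_\lambda$ as in the proof of Lemma \ref{cool}, the strand at each intermediate vertex $v_\ell$ for $2\le\ell\le k$, and likewise the strand at $x_m$, traces only a small contractible loop bounding the quadrilateral of the two triangles adjacent to the incident edge of $\lambda$. Since $\lambda$ is simple, these quadrilaterals are linearly ordered along $\lambda$ and the $x_j$-strand visits each of them during a single well-defined subinterval of time, so one can construct a homotopy in $F_n(\Sigma_g)$ that contracts each bounding loop within its own quadrilateral during a time disjoint from when the $x_j$-strand is inside that same quadrilateral. The result is a path homotopy from $w_\lambda$ to $[f_i]$ in $F_n(\Sigma_g)$.

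The main obstacle will be the no-collision scheduling of these contractions: any time-space overlap would break the homotopy as a path in $F_n(\Sigma_g)$. The simpleness of $\lambda$ and the fact that each intermediate loop is confined to a single quadrilateral together guarantee the existence of such a schedule, and once it is written out the identification $w_\lambda=[f_i]$ in $B_n(\Sigma_g)$ yields $[f_i]\in E_n(\Sigma_g)$.
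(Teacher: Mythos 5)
Your proof is correct and follows the same route as the paper: the paper's own proof of Lemma \ref{split} simply invokes Case 2 of Proposition \ref{one}, which likewise takes $[f_i]$ to wind once about a single $x_m\in C_{[\sigma]}$, picks a simple edge path $\lambda$ from $x_j$ to $x_m$, and realizes $[f_i]$ by the quasi-edge braid $w_\lambda$ of Lemma \ref{cool}. Your additional verification that the intermediate strands of $w_\lambda$ trace contractible loops that can be scheduled away without collisions is a detail the paper leaves implicit, but it does not change the approach.
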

\begin{proof}
Follows from the same argument given in Case 2 of Proposition \ref{one}.
\end{proof}
\begin{lem}\label{comm}
Commutators of the generators $[f_i]$ of $\pi_1(\Sigma_g-C_{[\sigma]},x)$ are quasi edge braids.
\end{lem}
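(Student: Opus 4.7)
The plan is to exhibit $[f_i,f_j]$ concretely as the commutator of two quasi-edge braids produced by Proposition \ref{one}. For each generator $f_k$ of $\pi_1(\Sigma_g-C_{[\sigma]},x)$ that proposition yields a quasi-edge braid $\gamma_k$ whose induced vertex loop at $x$ is $f_k$; since $E_n(\Sigma_g)$ is a subgroup of $B_n(\Sigma_g)$, the commutator $[\gamma_i,\gamma_j]$ is then automatically a quasi-edge braid. The substantive work is to identify this commutator with the pure braid $[f_i,f_j]$ itself in $P_n(\Sigma_g)$.

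First I would dispose of the puncture--puncture case: if both $f_i$ and $f_j$ lie in the range $2g+1\le i,j\le 2g+\#C_{[\sigma]}-1$, Lemma \ref{split} places each of them individually in $E_n(\Sigma_g)$, so the commutator is immediate. For the remaining cases, assume without loss of generality that $f_i$ is a surface generator. I would take $\gamma_i=w(x,x_{m_i})$ from Case 1 of Proposition \ref{one}, where $x_{m_i}\in\{x_1,\ldots,x_n\}-C_{[\sigma]}-\{x\}$ is an auxiliary vertex, so that by Lemma \ref{super} the only non-trivial vertex loops of $\gamma_i$ are $f_i$ at $x$ and $f_i^{-1}$ at $x_{m_i}$. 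The braid $\gamma_j$ is constructed analogously (or taken to be $f_j$ itself, via Lemma \ref{split}, when $f_j$ is a puncture generator), and in the surface--surface case I would pick $x_{m_i}\ne x_{m_j}$ so the supports of $\gamma_i$ and $\gamma_j$ meet only at $x$.

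Since the vertex loop map $p^r_{\ast}$ is a group homomorphism on $P_n(\Sigma_g)$ for each $r$, the disjoint-support choice makes the vertex loop of $[\gamma_i,\gamma_j]$ at $x$ equal to $[f_i,f_j]$, while at $x_{m_i}$, $x_{m_j}$, and every other vertex the induced vertex loop is a commutator involving the identity, hence trivial. Consequently $[\gamma_i,\gamma_j]$ carries the same vertex loop signature as $[f_i,f_j]$ viewed in $P_n(\Sigma_g)$ via the natural embedding $\pi_1(\Sigma_g-C_{[\sigma]},x)\hookrightarrow P_n(\Sigma_g)$.

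The main obstacle, and the step I expect to be the hard part, is upgrading this agreement of vertex loops to an equality inside $P_n(\Sigma_g)$, since in general a pure braid is not determined by its tuple of vertex loops. My plan for closing the gap is to invoke the Fadell--Neuwirth fibration, which realizes pure braids with all strands away from $x$ truly constant as the fiber subgroup $\pi_1(\Sigma_g-(K_0-\{x\}),x)\hookrightarrow P_n(\Sigma_g)$. The disjoint-support arrangement forces the auxiliary strand motions at $x_{m_i}$ and $x_{m_j}$ contributed by $\gamma_i^{\pm 1}$ and $\gamma_j^{\pm 1}$ to cancel pointwise inside the commutator, not merely up to homotopy of vertex loops, so that $[\gamma_i,\gamma_j]$ genuinely lies in this fiber subgroup, with image a chosen lift of $[f_i,f_j]$. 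That identifies $[\gamma_i,\gamma_j]=[f_i,f_j]$ in $P_n(\Sigma_g)$, and since $[\gamma_i,\gamma_j]\in E_n(\Sigma_g)$ the conclusion $[f_i,f_j]\in E_n(\Sigma_g)$ follows.
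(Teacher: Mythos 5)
Your overall strategy --- realize each generator by a quasi-edge braid whose action at $x$ is the loop $f_k$, take the commutator of two such braids, and argue that everything away from $x$ cancels --- is the same strategy the paper uses, and you correctly isolate the genuinely hard step (a pure braid is not determined by its tuple of vertex loops). But your implementation has a gap that the paper's choice of braids is designed to avoid. The braid $w(x,x_{m_i})=q_{\mu}\widehat{\Omega}q_{\mu}^{-1}$ from Lemma \ref{super} is supported not only on a neighbourhood of the edge loop representing $f_i$ but also on a neighbourhood of the conjugating edge path $\mu$ running out to the auxiliary vertex $x_{m_i}$. Choosing $x_{m_i}\neq x_{m_j}$ does nothing to prevent $\mu$ from crossing the loop representing $f_j$, or the path $\mu'$, so the premise that the supports of $\gamma_i$ and $\gamma_j$ ``meet only at $x$'' is unjustified. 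Without that disjointness, the Fadell--Neuwirth step is circular: to place $[\gamma_i,\gamma_j]$ in the fiber subgroup $\pi_1(\Sigma_g-(K_0-\{x\}),x)$ you must show its image under the strand-forgetting map $P_n(\Sigma_g)\to P_{n-1}(\Sigma_g)$ is trivial, i.e.\ that the forgotten braids $\bar{\gamma}_i,\bar{\gamma}_j$ commute in $P_{n-1}(\Sigma_g)$ --- which is exactly the disjoint-support assertion you have not established. Triviality of the vertex loops at $x_{m_i}$ and $x_{m_j}$ (which your homomorphism argument does give) is strictly weaker than triviality of the image braid, and the phrase ``cancel pointwise'' conceals the real content: in $aba^{-1}b^{-1}$ the motions of $a$ and $a^{-1}$ are separated in time by the motion of $b$, so they cancel only if $b$ does not interfere.

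The paper avoids the auxiliary vertices entirely: in its Case 1 it takes the commutator of the two \emph{non-pure} cyclic-shift braids $b_{e_1}\cdots b_{e_0}$ and $b_{z_1}\cdots b_{z_0}$ attached to simple edge-loop representatives of $f_i$ and $f_j$ which, by Proposition \ref{hom}, intersect only at $x$. Each of these braids is supported on a neighbourhood of its own loop, the induced permutations have disjoint support, and the commutator is a pure braid concentrated near the single intersection point $x$, where a local computation identifies it with $[f_i,f_j]$; the mixed case replaces one factor by $w_{\mu}$ from Lemma \ref{cool}, and the puncture--puncture case is Lemma \ref{split}, matching your treatment. (The paper's own justification of the key identification is admittedly terse, but its geometric setup makes the localization at $x$ transparent.) To salvage your pure-braid version you would need to show the conjugating paths can be chosen disjoint from the other loop and path, or that their contribution to the commutator cancels --- which is precisely what is currently being assumed rather than proved.
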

\begin{proof}
Fix arbitrary generators $[f_i]$ and $[f_j]$ with $i\neq j$. We have the following possibilities:

\begin{itemize}
    \item Case 1: $1\leq i,j\leq 2g$. Once again, we may regard $[f_i]$ and $[f_j]$ as two of the standard generators of $\pi_1(\Sigma_g,x)$. By Proposition \ref{hom}, we may assume that $f_i$ and $f_j$ are simple edge loops that intersect only at the basepoint $x\in\Sigma_g$. Let $\lambda=e_1, \ldots, e_n$ (resp. $\lambda'=z_1, \ldots, z_k$) be the ordered list of edges constituting $f_i$ (resp. $f_j)$. For ease of notation, the indices of the vertices and edges in $\lambda$ (resp. $\lambda'$) will be taken modulo $n$ (resp. modulo $k$). Let $e_r=(v_r, v_{r+1})$ and $z_l=(w_l, w_{l+1})$. We may assume that $x=v_1=w_1$. Then a direct computation shows that the commutator
$
[b_{e_1}b_{e_2}\ldots b_{e_0},b_{z_1}b_{z_2}\ldots b_{z_0}]
$
is precisely $\big[[f_i],[f_j]\big]$. 
\vspace{2mm}
   \item Case 2: $2g+1\leq i,j\leq 2g+\#C_{[\sigma]}-1$. This is immediate from Lemma \ref{split}.
   \vspace{2mm}
    \item Case 3: $1\leq i\leq 2g$ and $2g+1\leq j\leq 2g+\#C_{[\sigma]}-1$. Let $f_i$ and $\lambda$ be as in Case 1. As before, $f_j$ can be taken to be a vertex loop which has a winding number of one about some $x_m\in C_{[\sigma]}$ and zero with respect to all points in $C_{[\sigma]}-\{x_m\}$. Fix a simple edge path $\mu$ from $x$ to $x_m$ and let $w_{\mu}$ be the quasi-edge braid constructed in Lemma $\ref{cool}$. Then the commutator
    $
    [b_{e_1}b_{e_2}\ldots b_{e_0},w_{\mu}]
    $
    is $\big[[f_i],[f_j]\big]$, as desired.
\end{itemize}
The remaining case follows by symmetry, so the proof is complete.

\end{proof}

\begin{rem}\label{qebbb}
Furthermore, one can show via a similar argument that \textit{conjugates} of commutators of the standard generators of $\pi_1(\Sigma_g-C_{[\sigma]},x)$ are quasi-edge braids. This implies the commutator subgroup $[\pi_1(\Sigma_g,x),\pi_1(\Sigma_g,x)]$ is contained in $E_n(\Sigma_g)$.
\end{rem}
\begin{prop}\label{two}
Let $[\sigma]$ be a pure balanced braid with $\#C_{[\sigma]}=n-1$, so that $[\sigma]$ has exactly one non-trivial vertex loop, say $[\sigma]_{x_j}$. Then $[\sigma]\in E_n(\Sigma_g)$.
\end{prop}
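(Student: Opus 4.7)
The plan is to view $[\sigma]$ as an element of the free group $\pi_1(\Sigma_g - C_{[\sigma]}, x_j)$ on the generators $[f_1], \ldots, [f_{2g+\#C_{[\sigma]}-1}]$ and factor it into a ``standard-generator'' core surrounded by pieces built from small-loop generators and commutators, which have already been identified with quasi-edge braids in Section \ref{braids}. The balanced hypothesis will then pin the core into the commutator subgroup of a rank-$2g$ free group, and Remark \ref{qebbb} will finish the job.

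Explicitly, I would first write $[\sigma] = w$ as a word in the free generators and apply Proposition \ref{represent} with the ordered subset $S = \{[f_1], \ldots, [f_{2g}]\}$, producing an $S$-connected factorization $[\sigma] = A \cdot F \cdot B$. By construction, $F$ is a word in the standard generators $[f_1], \ldots, [f_{2g}]$ alone (commutators are never in $S$), while each letter of $A$ or $B$ is either a small-loop generator $[f_i]$ with $i > 2g$ or a commutator of generators appearing in $w$. Lemma \ref{split} places each small-loop letter in $E_n(\Sigma_g)$, and Lemma \ref{comm} together with Remark \ref{qebbb} does the same for each commutator letter. Hence $A, B \in E_n(\Sigma_g)$ immediately.

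For the middle factor $F$, I would invoke the balanced hypothesis $\omega([\sigma]) = 0$. Since all strands of $[\sigma]$ save one are constant, $\omega([\sigma])$ equals the image of $[\sigma]_{x_j}$ under the composition $\pi_1(\Sigma_g - C_{[\sigma]}, x_j) \to \pi_1(\Sigma_g, x_j) \to H_1(\Sigma_g)$. Under this composition the standard generators $[f_i]$ ($1 \le i \le 2g$) hit a basis of $\mathbb{Z}^{2g}$, while the small-loop generators and every commutator vanish (the former bound disks once the punctures are filled, the latter are killed by abelianization). Thus $\omega(A) = \omega(B) = 0$, forcing $\omega(F) = 0$. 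Because $F$ uses only $[f_1], \ldots, [f_{2g}]$, its image in $H_1(\Sigma_g) \cong \mathbb{Z}^{2g}$ coincides with its abelianization inside the rank-$2g$ free subgroup they generate. Therefore $F$ lies in the commutator subgroup of that free subgroup, which by Remark \ref{qebbb} is contained in $E_n(\Sigma_g)$, and $[\sigma] = A F B \in E_n(\Sigma_g)$ follows.

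The main obstacle I anticipate is the bookkeeping around Proposition \ref{represent}: one must confirm that every non-$S$ letter produced by the rearrangement is either a small-loop generator or a commutator of two generators, so that Lemmata \ref{split}, \ref{comm} and Remark \ref{qebbb} together account for them. With that verified, the remaining ingredient --- that abelianization kills both commutators and small loops in $H_1(\Sigma_g)$ --- is immediate, and the three factors assemble to give the proposition.
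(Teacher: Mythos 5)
Your proposal is correct and follows essentially the same route as the paper: apply Proposition \ref{represent} with $S=\{[f_1],\ldots,[f_{2g}]\}$, dispose of the non-$S$ letters via Lemma \ref{split}, Lemma \ref{comm} and Remark \ref{qebbb}, and use the balanced condition to place the contiguous block of standard generators in a commutator subgroup covered by Remark \ref{qebbb}. The only (harmless, arguably cleaner) deviation is that you detect the vanishing of that block's exponent sums directly in the free group on $[f_1],\ldots,[f_{2g}]$, whereas the paper pushes $[\sigma]$ forward to $\pi_1(\Sigma_g,x_j)$ and invokes the Hurewicz theorem.
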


\begin{proof}
Regard $[\sigma]$ as an element of $\pi_1(\Sigma_g - C_{[\sigma]}, x_j)$ and let $$i_*:\pi_1(\Sigma_g-C_{[\sigma]},x_j)\rightarrow\pi_1(\Sigma_g,x_j)$$ be the surjection induced by the inclusion of spaces $$i:(\Sigma_g-C_{[\sigma]},x_j)\hookrightarrow(\Sigma_g,x_j).$$ We have that 
$$i_*([f_i])= \begin{cases} [f_i] &\mbox{if } 1\leq i\leq 2g \\ 
1 & \mbox{if }  2g+1\leq i\leq 2g+\# C_{[\sigma]}-1 . \end{cases}$$
Since $[\sigma]\in\mathrm{ker}(\omega)$, it is clear that $i_*([\sigma])\in\mathrm{ker}(\Phi)$, where $$\Phi:\pi_1(\Sigma_g,x_j)\rightarrow H_1(\Sigma_g;\mathbb{Z})$$ denotes the Hurewicz map. By the Hurewicz theorem, $i_*([\sigma])$ is in the commutator subgroup $\big[\pi_1(\Sigma_g,x_j),\pi_1(\Sigma_g,x_j)\big]$ of $\pi_1(\Sigma_g,x_j)$. Using Lemma \ref{comm} and Remark \ref{qebbb}, it follows that $i_*([\sigma])$ is a quasi-edge braid. By writing $[\sigma]$ as a unique word on the generators, this means that the product of all generators $[f_i]$ with $1\leq i\leq 2g$ in the word (in order of appearance) is a quasi-edge braid. Let $S=\{[f_1], \ldots, [f_{2g}]\}$. Then by Proposition \ref{represent}, we may assume that $[\sigma]$ is expressed by an $S$-connected word $w$ such that all such elements in $w$ are either commutators or $[f_{i}]$ for some $2g+1\leq i\leq 2g+\#C_{[\sigma]}-1$. By the $S$-connectivity of $w$, it suffices to show that the product of all elements in $w$ that are \textit{not} in $S$ is a quasi-edge braid. This holds by Lemma \ref{split} and Remark \ref{qebbb}, so it follows that $[\sigma]$ is a quasi-edge braid. This completes the proof.

\end{proof}

\section{Future Work}
In this paper, we constructed a natural map $\omega:B_n(\Sigma_g)\rightarrow H_1(\Sigma_g; \mathbb{Z})$ and studied its kernel using simplicial triangulations of $\Sigma_g$. In particular, we showed that $\mathrm{ker}(\omega)$ is generated by canonical braids which are constructed using edges in the triangulation. There are several avenues for future investigation.\\
\begin{itemize}
\item Since $\mathrm{ker}(\omega)$ is generated by edge braids, it would be useful to find a minimal set of relations between the edge braids so that one can build a presentation of $\mathrm{ker}(\omega)$. Lemma \ref{local} gives evidence that such a group presentation might be
similar to Artin's \cite{name} presentation of the classical braid groups $B_n$.\\
\item One can also consider braid groups on non-orientable surfaces (See \cite{non}). Furthermore, the homomorphism $\omega$ can be constructed in exactly the same way for general topological spaces. Thus, it is natural to ask whether there is an analog of Theorem \ref{theorem} for more general surfaces (e.g non-orientable surfaces) than the closed orientable surfaces $\Sigma_g$. The proof of such a result would probably be similar, except it would depend on properties of the fundamental groups of non-orientable surfaces.

\end{itemize}

\newpage

\end{document}